\begin{document}
\newtheorem{claim}{Claim}
\newtheorem{proposition}{Proposition}
\newtheorem{definition}{Definition}
\newtheorem{theorem}{Theorem}
\newtheorem{lemma}{Lemma}
\newtheorem{remark}{Remark}
\newtheorem{convention}{Convention}
\def\Rset{\mathbb{R}}
\def\M{\mathcal{M}}
\def\A{\mathcal{A}}
\def\T{\mathcal{T}}
\def\pdw#1{\mathrm{pdw}_{#1}}
\def\Zset{\mathbb{Z}}

\title[A special spherical tiling by congruent concave quadrangles ]{
 Classification of spherical tilings by congruent quadrangles over
 pseudo-double wheels~(I)\\
| a special tiling by congruent concave quadrangles}

\date{\today}                

\subjclass{Primary 52C20; Secondary 05B45, 51M20, 05C10}
\keywords{monohedral tiling, spherical quadrangle, pseudo-double wheel.}

\author[Y. Akama]{Yohji Akama}
\address{Mathematical Institute\\
  Graduate School of Science \\
  Tohoku University\\
  Sendai 980-0845, Japan} 

\email{akama@m.tohoku.ac.jp}
\urladdr{http://www.math.tohoku.ac.jp/akama/stcq}

\begin{abstract}Every simple quadrangulation of the sphere is
 generated by a graph called a pseudo-double wheel with two local
expansions~(Brinkmann et al. ``Generation of simple quadrangulations of
the sphere.'' Discrete Math., Vol.~305, No.~1-3, pp.~33-54, 2005). So,
toward classification of the spherical tilings by congruent quadrangles,
we propose to classify those with the tiles being convex and the graphs
being pseudo-double wheels. In this paper, we verify that a certain
series of assignments of edge-lengths to pseudo-double wheels does not
admit a tiling by congruent convex quadrangles. Actually, we prove the
series admits only one tiling by twelve congruent \emph{concave}
quadrangles such that the symmetry of the tiling has only three
perpendicular 2-fold rotation axes, and the tiling seems new.
\end{abstract}
\maketitle
\section{Introduction}

A \emph{spherical tiling by congruent polygons} is, by definition, a
covering of the unit sphere by congruent spherical polygons such that
(i) none of the polygons share their inner point, (ii) an edge of a
spherical polygon matches an edge of another spherical polygon, and
(iii) each vertex is incident to more than two edges. Each spherical
polygon is called a \emph{tile}.


If there is a spherical tiling by $p$-gons, then $p$ is either $3,4$ or
$5$~(\cite{agaoka:quad}), because of Euler's formula $V-E+F=2$. In
\cite{MR1954054}, Ueno-Agaoka classified all the spherical tilings by
congruent triangles. By using their classification, we can complete the
classification of all the spherical tilings by congruent quadrangles
where the quadrangle can be divided into two congruent
triangles~\cite{akama12:_spher_tilin_by_congr_rhombi_kites_darts}.
Interestingly, there is a spherical tiling by 24 congruent kites with
the graph being the same as \emph{deltoidal icositetrahedron}, a
\emph{Catalan-solid}~\cite{MR2410150}, while there is another spherical
tiling by the congruent kites with different symmetry.

As for spherical tilings by congruent quadrangles, if the quadrangle
cannot be divided into two congruent triangles, then the edge-lengths of
the tile is necessarily one of two types of Figure~\ref{fig:type24_map},
because an edge of a tile should match an edge of another tile with the
same length and because Euler's formula implies the existence of a
3-valent vertex~\cite{agaoka:quad}.
\begin{figure}[ht]
{\includegraphics[width=6cm]{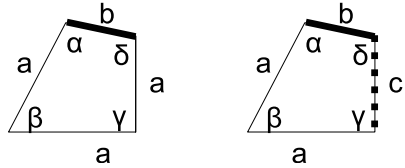}}
\caption{A quadrangle of type~2~(left) and a quadrangle of
type~4~(right). Designation of edge-lengths and angles. The variables
$a,b,c$ are for edge-lengths and they have different values.  The
variables $\alpha,\beta,\gamma,\delta$ are for inner angles.
\label{fig:type24_map}}
\end{figure}
Following \cite{agaoka:quad}, we call the two types \emph{type~2} and
\emph{type~4}. They gave a somehow unexpected, sporadic, 4-fold
rotaional symmetric spherical tiling by 16 congruent quadrangles
of type~2~\cite[Theorem~2]{agaoka:quad}.  As a necessary condition for
a spherical quadrangle of type~2 to exist, they provided inequalities
involving trigonometric functions, in terms of the inner angles of the
tile~(\cite[Proposition~3]{agaoka:quad}). In \cite[Figure~11]{agaoka:quad}, they presented spherical tilings by congruent \emph{concave} quadragles of type~2, and suggested
that the class of  spherical tilings by congruent concave quadrangles is
difficult to classify.

In order to classify all the spherical tilings by congruent
quadrangles, a graph-theoretically systematic approach is first to
classify all the following spherical tilings by $F\ge6$ congruent
quadrangles, having $F/2$-fold rotational symmetry~(See
Figure~\ref{fig:example} and the first three spherical tilings in \cite[Figure~11]{agaoka:quad}).
\begin{figure}[ht]
\begin{tabular}{l l l}
\includegraphics[width=5cm,height=3cm]{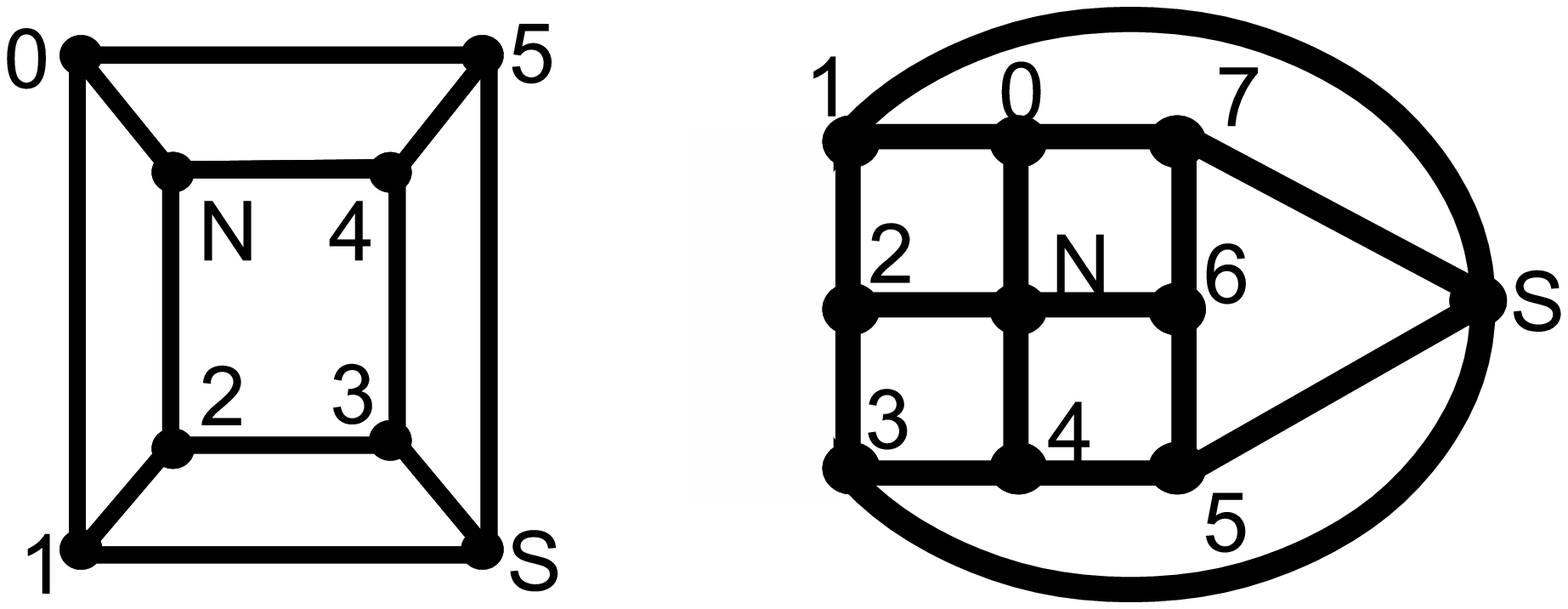}
 &
\includegraphics[width=5cm,height=2.5cm]{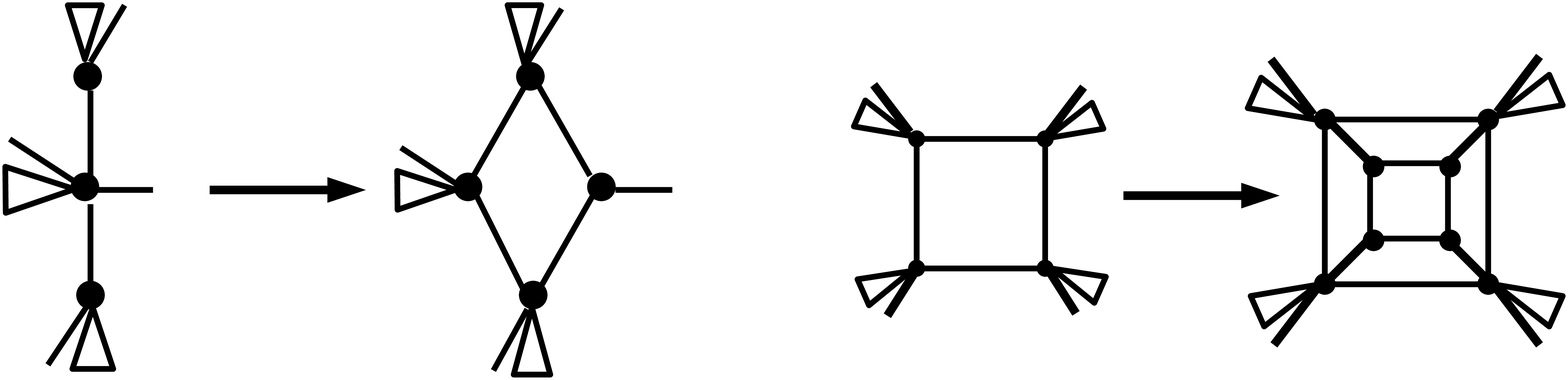}\end{tabular}

\includegraphics[width=10cm]{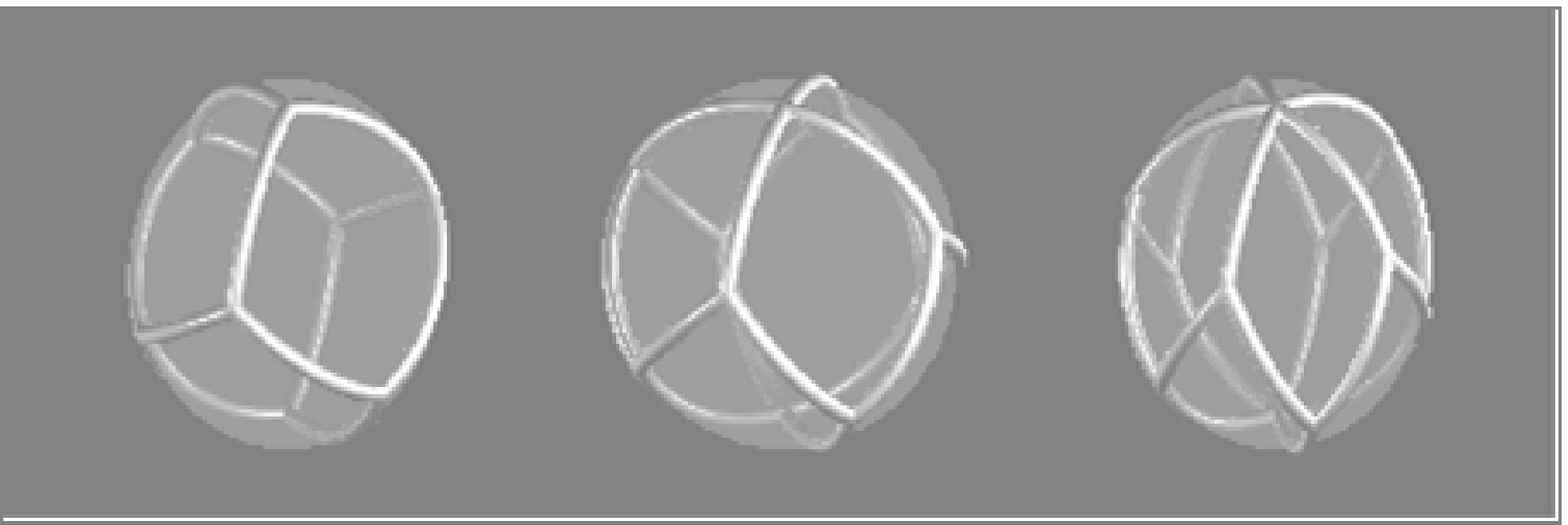}
\caption{The pictures in the
 upper row are,  from left to right, a  pseudo-double wheel of 6 faces, a
 pseudo-double wheel of 8 faces, and two expansions of spherical quadrangulations to increase the number
 of faces (Brinkmann et al.~\cite{MR2186681}). The graphs in this
 paper should be interpreted using Convention~\ref{conv} stated at the
 end of Section~\ref{sec:preliminaries}.  The lower pictures are
spherical tilings by congruent quadrangles over pseudo-double wheel of $F$ faces~($F=6,8,10$, left-to-right)\label{fig:example}. 
 \label{p_expansion}}
\end{figure}
The pair of the vertices and the edges of the tiling forms a graph,
which is obtained from the cycle consisting of $F$ vertices, by
adjoining the north and the south poles alternately to the vertices of
the cycle.  Such a graph falls into the class of \emph{pseudo-double
wheels}.  According to Brinkmann et al.~\cite{MR2186681}, every
spherical quadrangulation is obtained from a pseudo-double wheel of even
or odd faces through finite applications of two local expansions~(see
Figure~\ref{p_expansion}~(upper right)). 

In \cite{akama11:_spher_tilin_by_congr_quadr_pdw_iii}, we prove that for
every spherical tiling by congruent \emph{convex} quadrangles of type~2
or type~4, the edge-lengths of the graph are none of two graphs in
Figure~\ref{fig:forbidden}. As a direct consequence, we can prove the
following statement: If a spherical tiling by congruent convex
quadrangles of type~2 or type~4 has a pseudo-double wheel as a graph,
then the length-assignment of the graph must be $F/2$-fold or an $F/6$-fold ``rotational
symmetric.'' 

The main theorem of this paper is: If a spherical tiling by congruent
quadrangles of type~2 or type~4 over a pseudo-double wheel has an
$F/6$-fold ``rotational symmetric'' length-assignment, then the tiling
necessarily consists of twelve congruent \emph{concave} quadrangles,
such a tiling exists indeed and uniquely up to mirror image, and the
symmetry of the tiling is low compared to the number of tiles. By this theorem, we can
complete the classification of all the spherical tilings by congruent
\emph{convex} quadrangles over pseudo-double
wheels~(\cite{akama11:_spher_tilin_by_congr_quadr_pdw_iii}).

This paper is organized as follows: In the next section, we present
basic relevant notions of spherical tilings by congruent polygons. In
Section~\ref{sec:main}, we present our main theorem, the background and
the organization of the proof.
The rest of the sections are devoted to the proof of the main theorem.

\section{Preliminaries}
\label{sec:preliminaries}

Throughout this paper, by the sphere we mean the sphere with the center
being the origin and the radius being 1, and  say two figures on the sphere are \emph{congruent} if there is an orthogonal
transformation between them. 

By a \emph{spherical triangle\/} (resp. \emph{spherical quadrangle\/}), we
mean a nonempty, simply connected, closed subset $T$ of the sphere with
the area less than $2\pi$ such that the boundary is the union of
three~(resp. four)  distinct geodesic lines but is not the union of any
two~(resp. three) distinct geodesic lines.

To prove that a spherical triangle indeed exists, we use following:
\begin{proposition}[\protect{\cite[p.~62]{MR1254932}}]\label{prop:vinberg}
If $0<A,B,C<\pi,\ A + B + C >\pi,\ \ -A + B + C <\pi,\ \ A - B + C
<\pi,\ \ A + B - C <\pi$, then there exists uniquely up to orthogonal
transformation a spherical triangle such that all the edges are geodesic
lines and the inner angles are $A, B$ and $C$.
\end{proposition}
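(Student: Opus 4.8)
The plan is to reduce the angle--angle--angle statement to the side--side--side statement by passing to polar (dual) triangles, and then to settle the side--side--side statement by a Gram-matrix computation.

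To a spherical triangle $T$ with unit vertex vectors $\mathbf u,\mathbf v,\mathbf w$ one associates its \emph{polar triangle} $T^{*}$, whose vertices are the unit normals to the three side great circles, each oriented toward the opposite vertex. The classical facts I would quote (or rederive in a line) are: $T^{*}$ is again a spherical triangle, $(T^{*})^{*}=T$, $g(T)^{*}=g(T^{*})$ for every $g\in O(3)$, the side-lengths of $T^{*}$ equal $\pi$ minus the angles of $T$, and the angles of $T^{*}$ equal $\pi$ minus the side-lengths of $T$. Granting these, a spherical triangle with prescribed angles $A,B,C$ exists if and only if one with prescribed side-lengths $\pi-A,\pi-B,\pi-C$ exists, and the uniqueness statements are equivalent because $T\mapsto T^{*}$ is an $O(3)$-equivariant involution.

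So it is enough to prove: if $0<a',b',c'<\pi$, if $a'+b'+c'<2\pi$, and if each of $a',b',c'$ is less than the sum of the other two, then a spherical triangle with side-lengths $a',b',c'$ exists and is unique up to $O(3)$. Here I would look for unit vectors $\mathbf u,\mathbf v,\mathbf w$ with $\mathbf v\cdot\mathbf w=\cos a'$, $\mathbf u\cdot\mathbf w=\cos b'$, $\mathbf u\cdot\mathbf v=\cos c'$, i.e.\ with Gram matrix
\[
G=\begin{pmatrix}1&\cos c'&\cos b'\\\cos c'&1&\cos a'\\\cos b'&\cos a'&1\end{pmatrix}.
\]
Such vectors exist, and are unique up to a common element of $O(3)$, exactly when $G$ is positive semidefinite, and they are linearly independent exactly when $G$ is positive definite. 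The first two leading principal minors are $1$ and $\sin^{2}c'>0$, and expanding the determinant yields the spherical analogue of Heron's identity,
\[
\det G=1-\cos^{2}a'-\cos^{2}b'-\cos^{2}c'+2\cos a'\cos b'\cos c'=4\sin s\,\sin(s-a')\,\sin(s-b')\,\sin(s-c'),\qquad s=\tfrac12(a'+b'+c').
\]
Under the hypotheses each of $s,\ s-a',\ s-b',\ s-c'$ lies in $(0,\pi)$, so all four sines are positive and $\det G>0$; hence $G$ is positive definite. Then $\mathbf u,\mathbf v,\mathbf w$ are a basis of $\Rset^{3}$, and the set of unit vectors of the form $x\mathbf u+y\mathbf v+z\mathbf w$ with $x,y,z\ge 0$ is a spherical triangle having these vertices, having the three minor arcs (of lengths $a',b',c'$, since each length is $<\pi$) as edges, having all three angles in $(0,\pi)$, and hence of area $<2\pi$; so it is a spherical triangle in the sense of this paper with the prescribed side-lengths. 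For uniqueness, two such triangles have vertex vectors with the same Gram matrix $G$ once the labelling is matched, hence differ by some $g\in O(3)$, and such a $g$ carries side great circles to side great circles and minor arcs to minor arcs, hence one triangle onto the other.

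To finish I would substitute $a'=\pi-A$, $b'=\pi-B$, $c'=\pi-C$: the conditions $0<A,B,C<\pi$ give $a',b',c'\in(0,\pi)$; $A+B+C>\pi$ gives $a'+b'+c'=3\pi-(A+B+C)<2\pi$; and $-A+B+C<\pi$ rewrites as $\pi-A<(\pi-B)+(\pi-C)$, i.e.\ $a'<b'+c'$, and cyclically. By the side--side--side statement there is a spherical triangle $T^{*}$ with side-lengths $a',b',c'$, unique up to $O(3)$; its polar $T=(T^{*})^{*}$ then has angles $\pi-a'=A$, $\pi-b'=B$, $\pi-c'=C$, and uniqueness of $T$ follows from that of $T^{*}$ via the polar involution. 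I expect the Heron-type factorization of $\det G$, together with the matching of the ranges of $s-a',s-b',s-c'$ with the four hypotheses, to be the one step with genuine content; the other likely pitfall is the sign and labelling bookkeeping in the side/angle correspondence for polar triangles.
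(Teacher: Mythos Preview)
Your argument is correct. The reduction to the side--side--side problem via polar duality, followed by the Gram-matrix/Sylvester analysis with the Heron-type factorization
\[
\det G \;=\; 4\sin s\,\sin(s-a')\,\sin(s-b')\,\sin(s-c'),
\]
is the standard and cleanest way to establish this existence-and-uniqueness statement, and your bookkeeping translating the four angle inequalities into the perimeter bound and the three strict triangle inequalities for $a',b',c'$ is accurate.

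There is, however, nothing to compare your proof against: the paper does not supply a proof of this proposition at all. It is stated as a quotation from \cite[p.~62]{MR1254932} and used as a black box in Section~\ref{sec:exist} to certify that the two triangles $N v_2 v_4$ and $v_2 v_3 v_4$ actually exist. So your write-up is not an alternative route but rather a self-contained justification of a result the paper is content to cite. If you wish to include it, the only places I would tighten are (i) spelling out why the positive-octant region $\{x\mathbf u+y\mathbf v+z\mathbf w:x,y,z\ge 0\}\cap S^2$ meets the paper's specific definition of ``spherical triangle'' (simply connected, area $<2\pi$, boundary genuinely three arcs and not two), which you sketch but do not quite pin down, and (ii) making explicit that the polar construction $T\mapsto T^{*}$ is well-defined on triangles in that sense and stays within that class, since that is what carries both existence and uniqueness back from the SSS statement.
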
 

We formalize relevant combinatorial notions of spherical tilings by
congruent polygons.  Please refer \cite{MR2744811} for the terminology
of graph theory.

\begin{definition}\label{def:map}
A \emph{map} is a triple $M=(V,\ E,\ \{O_v\;|\; v\in V\})$ such that
$(V,E)$ is a graph (See \cite[Section~1.1]{MR2744811}) and each $O_v$ is
a cyclic order for the edges incident to the vertex $v$. For a vertex
$v$ of $M$, the set $A_v$ of \emph{angles} around $v$ is
\begin{align*}
A_v:=\{(v_1, v, v_2),  (v_2, v, v_3), \ldots, (v_{n-1}, v, v_n), (v_n, v,
      v_1)\}
\end{align*}  
where the list $v v_1, v v_2, \ldots, v v_n$ is the enumeration of $\{ v
u\;|\; v u \in E\}$ without repetition by the cyclic order $O_v$. We
write an inner angle $(u,v,w)$ by $\angle u v w$.  The \emph{mirror} of
a map $M$ is the map $M^R$ with the cyclic orders reversed. Thus $\angle
u v w$ is an inner angle of the original map, if and only if $\angle w v
u$ is an inner angle of the mirror of the map.\end{definition}

\begin{definition}[pseudo-double wheel~\protect{\cite{MR2186681}}]
For an even number $F$ greater than or equal to 6, a \emph{pseudo-double
wheel\/} $\pdw{F}$ with $F$ faces is a map such that

\begin{itemize}
\item the graph is obtained from a cycle
$(v_0, v_1,
v_2, \ldots, v_{F-1})$, by adjoining a new vertex $N$ to each $v_{2i}$
 $(0\le i<F/2)$ and then by adjoining a new vertex $S$ to each $v_{2i+1}$
 $(0\le i<F/2)$. We identify the suffix $i$ of the vertex $v_i$ modulo
 $F$. 
\item
The cyclic order 
at the vertex $N$ is defined as follows: the edge $N v_{2i+2}$ is next
      to the edge $N v_{2i}$. 
The cyclic order at the vertex $v_{2i}$ ($0\le i\le F/2$) is: the edge
 $v_{2i} N$ is next to the edge $v_{2i} v_{2i+1}$, which is next to the edge $v_{2i}
v_{2i-1}$.
The cyclic order 
at the vertex $S$ is: the edge $S v_{2i-1}$ is next to the edge $S v_{2i+1}$. 
The cyclic order at the vertex $v_{2i+1}$ ($0\le i< F/2$) is:
the edge $v_{2i+1} S$ is next to the edge $v_{2i+1} v_{2i}$, which is
      next to the edge $v_{2i+1}v_{2i+2}$.
\end{itemize}
\end{definition} 
We call each edge $N v_{2i}$ \emph{northern}, each edge $S v_{2i+1}$
\emph{southern}, and the other edges \emph{non-meridian}. The number of
edges is $2F$. See Figure~\ref{fig:example}~(lower) for $\pdw{6}$, $\pdw{8}$ and $\pdw{10}$.

We use frequently the following notion:

\begin{definition}[Chart]
A \emph{chart} is,  by definition, a triple $\A=(M, L, K)$  such that
\begin{itemize}
\item $M$ is a map.  Let $V$ be the vertex set of $M$;

\item $L$ is a \emph{length-assignment}, which is a function from the
      edge set $E$ of $M$ to $\{a,b,c\}$ with $a,b,c\in (0,2\pi)$;

\item $K$ is an \emph{angle-assignment}, which is a function from the set
      $\bigcup_{v\in V}A_v$ of the angles to the set of affine combinations of the
      variables $\alpha,\beta,\gamma,\delta$ over $\Rset$ subject to an equation
\begin{align}
 \sum_{\psi\in A_v}K(\psi) = 2\pi\quad \mbox{for each vertex $v\in V$}. \label{sys:2}
\end{align}
Here $A_v$ is the set of angles around the vertex $v$, as in defined in Definition~\ref{def:map}.
For a vertex
      $v\in V$ and an angle $\psi\in A_v$, $K(\psi)$ is called the \emph{type}
      of the angle $\psi$, and $\sum_{\psi\in A_v} K(\psi)$ is called the
      \emph{vertex type of the vertex $v$}; and
\item The pair of $K$ and $L$ satisfies the constraint described by
      Figure~\ref{fig:type24_map}~(left) (``type~2'') for each face or
      the constraint described by
      Figure~\ref{fig:type24_map}~(right) (``type~4'') for each face.
\end{itemize} 

We say a spherical tiling $\T$ by polygons \emph{realizes} a chart $\A$, provided
that there is an embedding $G$ from $\A$ to the sphere such that
\begin{itemize}
\item
 $G$ is
 a bijection from the vertex set of the chart $\A$ to the that of the tiling $\T$, 

\item $G$
 is a bijection from the edge set of the chart $\A$ to that of the
     tiling $\T$, such that (i) if two edges $u v$ and $u' v'$ of $\A$ has intersection $\{w\}$, then 
the intersection of two edges $G(u v)$ and $G(u' v')$ is $\{G(w)\}$; and
     (ii) if $u v$ and $u' v'$ is disjoint, then $G(u v)$ and $G(u' v')$
     do not intersect.

\item
$G$ preserves the cyclic order $O_v$ of each vertex $v$ of the chart $\A$ to the
orientation of the sphere at $G(v)$. In other words, for any vertex
$v$ of the chart, if we rotate a screw at a vertex $G(v)$ according to
     the cyclic order $O_v$, then the
screw goes outbound from the center of the sphere.

\item
For any angle $\angle u v w$ of the chart $\A$, 
 $\angle G(u) G(v) G(w)$ is an angle of the tiling $\T$ and is $K(\angle u v w)$.

\item
For any edge $u v$ of the chart $\A$, $G(u) G(v)$ is an edge of the tiling $\T$ and has
 length $L(u v )$.
\end{itemize} 
The \emph{mirror} $\A^R$ of a chart $\A=(M,L,K)$ is, by definition, a chart
$(M^R, L, K^R)$ where $K(\angle w v u)= K^R(\angle u v w)$.
Thus if a tiling $\T$ realizes $\A$, then the mirror image of
 $\T$ does the mirror $\A^R$.
\end{definition} 

The charts in Figure~\ref{fig:example} and Figure~\ref{fig:forbidden} are subject to
the following convention:
\begin{convention}[\protect{Brinkmann et al.~\cite{MR2186681}}]\label{conv}
Each displayed vertex is distinct from the others;
Edges that are completely drawn must occur in the cyclic order given in the picture;
Half-edges indicate that an edge must occur at this position in the cyclic order
around the vertex;
A triangle indicates that one or more edges may occur at this position in the cyclic
order around the vertex (but they need not);
If neither a half-edge nor a triangle is present in the angle between two edges in the
picture, then these two edges must follow each other directly in the cyclic order of
edges around that vertex.
\end{convention}

\section{Main theorem and the background} \label{sec:main}
In \cite{sakano10:_towar_class_of_spher_tilin}, Sakano classified the
spherical tilings by six or eight congruent quadrangles of type~2 or of
type~4. His argument is generalized for the case the number of the tiles
is more than ten, as follows: Recall quadrangles of type~2 and those of
type~4, by Figure~\ref{fig:type24_map}.

\begin{theorem}[\protect{\cite{akama11:_spher_tilin_by_congr_quadr_pdw_iii}}]\label{thm:forbidden}The
 two charts in Figure~\ref{fig:forbidden} and their mirrors are
impossible as the chart of a spherical tiling by ten or more congruent
\emph{convex} quadrangles of type~$t$ for each $t=2,4$.  Thick edges
are of length $b$ while the other edges are of length $a$ or $c$.  The
charts should be interpreted by Convention~\ref{conv}.

\begin{figure}[ht]
\includegraphics[width=12cm]{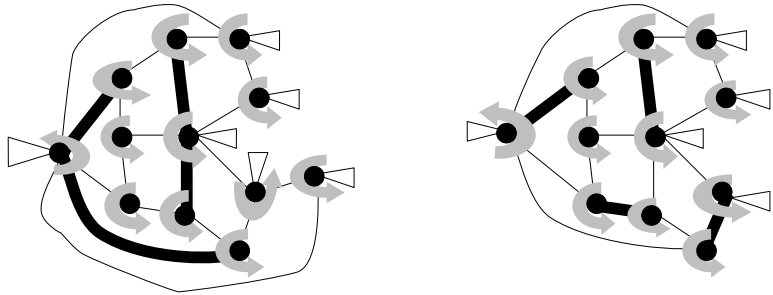} \caption{Charts
forbidden for a spherical tiling by ten or more congruent
\emph{convex} quadrangles of type~$t=2,4$. See
Theorem~\ref{thm:forbidden}. The figures are subject to Convention~\ref{conv}. 
\label{fig:forbidden}}
\end{figure}
\end{theorem}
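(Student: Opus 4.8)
The plan is to argue by contradiction: suppose some spherical tiling $\T$ by $F\ge 10$ congruent convex quadrangles of type~$t\in\{2,4\}$ realizes one of the two charts $\A$ of Figure~\ref{fig:forbidden}. Since the mirror image of such a tiling is again a tiling of the same kind and realizes the mirror chart $\A^{R}$, it suffices to treat the two charts exactly as drawn. Label the inner angles of the tile $\alpha,\beta,\gamma,\delta$ and its edge-lengths $a,b,c$ as in Figure~\ref{fig:type24_map}, so that $a,b,c$ are pairwise distinct, convexity gives $0<\alpha,\beta,\gamma,\delta<\pi$, and, the area of a tile being positive and less than $2\pi$, $2\pi<\alpha+\beta+\gamma+\delta<4\pi$. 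First I would compile the finite list of admissible \emph{vertex types}: by~\eqref{sys:2} a vertex carries a multiset of angles drawn from $\{\alpha,\beta,\gamma,\delta\}$ summing to $2\pi$; convexity forces every vertex to have degree at least $3$, and convexity together with the realizability constraints for type-$t$ quadrangles (Proposition~\ref{prop:vinberg} for the triangles one cuts off along a diagonal, and the trigonometric inequalities of \cite{agaoka:quad} for type~2) bounds the degree from above, so only finitely many vertex types occur, each of which is equivalent to a linear relation among $\alpha,\beta,\gamma,\delta$.

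The second, combinatorial, step is to read the chart through Convention~\ref{conv} and exploit the thick edges (those of length $b$). Since the tile is one fixed figure, each of its corners has two definite adjacent edge-lengths; hence knowing that a drawn edge is thick, and hence of length $b$, severely constrains --- given the at most few edges of the tile of length $b$, and up to the global reflection of the tile's labeling --- which of $\alpha,\beta,\gamma,\delta$ may sit at that edge's two endpoints, and thus at the corners of the chart meeting that edge. I would then propagate: impose~\eqref{sys:2} at each displayed vertex; every angle thereby placed at a corner determines the lengths of the two tile-edges issuing from it, which in turn constrains the orientations of the adjacent tiles and the admissible types of the next vertices; iterate. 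Because the local incidence structure is completely prescribed by Convention~\ref{conv}, this propagation branches only finitely often.

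It then remains to kill every branch. The generic death is linear-algebraic: the accumulated relations among the angles, fed through the spherical cosine rule for a type-$t$ quadrangle, force an equality among $a,b,c$ (contradicting their distinctness), or become outright inconsistent, or push some inner angle up to~$\pi$ (contradicting convexity), or create a vertex of degree~$2$ (contradicting condition~(iii) in the definition of a spherical tiling). The remaining, more delicate, branches are those that survive this but then force the whole quadrangulation to close up with at most eight faces; this is exactly where the hypothesis $F\ge 10$ enters, and for the few borderline tile-shapes that still slip through one rules out the tile itself from the realizability inequalities. I expect the main obstacle to be precisely this last stage: organizing the case split --- two charts, two tile types, several admissible vertex types, and the several tile orientations at each corner --- so that it is manifestly exhaustive and so that each of its many branches is seen to end in one of the contradictions above. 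The difficulty is the bookkeeping, not any single estimate.
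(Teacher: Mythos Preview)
This theorem is not proved in the present paper: it is quoted from the companion paper \cite{akama11:_spher_tilin_by_congr_quadr_pdw_iii} and used here only as input to the discussion preceding Theorem~\ref{thm:k}. There is therefore no ``paper's own proof'' to compare your proposal against.

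As to your outline itself: the strategy you describe --- enumerate admissible vertex types under convexity, use the length-$b$ edges to pin down which of $\alpha,\beta,\gamma,\delta$ may occupy each corner, propagate through the chart via \eqref{sys:2}, and kill branches by linear relations among the angles combined with the realizability constraints --- is the natural one and is consistent with how the rest of this paper (e.g.\ Lemma~\ref{lem:6.2}, Lemma~\ref{lem:6.3}, and the proof of Lemma~\ref{lem:lm}) handles similar local arguments. What you have written, however, is a plan rather than a proof: none of the branches is actually worked out, and you yourself identify the exhaustive case split as the crux. In particular, your claim that convexity plus realizability bounds the vertex degree from above needs to be made precise (the bound on how many angles can sum to $2\pi$ follows from convexity alone, but the interaction with the specific chart geometry is where the content lies), and the ``few borderline tile-shapes'' you mention at the end are exactly the place where a sketch can silently fail. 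To turn this into a proof you would have to carry the propagation through explicitly for each of the two charts and each tile type, as is done in \cite{akama11:_spher_tilin_by_congr_quadr_pdw_iii}.
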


Let the leftmost vertex of the chart in Figure~\ref{fig:forbidden} be
the vertex $S$ of
the pseudo-double wheel and let the central vertex incident
to at least five edges be the vertex $N$.  In the left chart of
Figure~\ref{fig:forbidden}, an upper, designated,
northern edge of length $b$ is immediately followed by another northern
edge of length $b$.  In the right chart, the designated
northern edge of length $b$ is followed immediately by two consecutive
non-meridian edges of length $b$. By the repeated applications of
Theorem~\ref{thm:forbidden} to a slightly rotated pseudo-double wheel $\pdw{F}$
around the vertices $N$ and $S$, we can observe that all the possibilities of the
length-assignment of
$\pdw{F}$ ($F\ge10$) is as follows:
\begin{enumerate}
\item[(p)] the tile is a convex quadrangle, but no northern edges of $\pdw{F}$ have length $b$; 

\item[(a)] the tile is a convex quadrangle. The northern edges and non-meridian edges
of $\pdw{F}$      alternatingly have length $b$. See
	   Assumption~\eqref{assert:alternating} of Theorem~\ref{thm:k}, and
      Figure~\ref{chart:a}; or

\item[(a$^R$)] (a) with $\pdw{F}$ replaced by the mirror $(\pdw{F})^R$.
\end{enumerate} 
As a consequence of the following Theorem, the possibilities (a) and
(a$^R$) are impossible for any even number $F\ge10$ of tiles. In other
words, for any even number $F\ge10$, there is no spherical tiling by $F$
congruent \emph{convex} quadrangles over $\pdw{F}$ such that (a) or
(a$^R$) holds. The only possibility (p) of the length-assignment of
$\pdw{F}$ is indeed realized with a spherical tiling by $F$ congruent
quadrangles. Such a spherical tiling is obtained from a spherical tiling
by congruent rhombic tiles, such as Figure~\ref{p_expansion}~(lower), by
deforming the tiles to quadrangles of type~2 or of type~4 with the
tiling kept $F/2$-fold rotational symmetric.  For details, see
\cite{akama11:_spher_tilin_by_congr_quadr_pdw_ii,akama11:_spher_tilin_by_congr_quadr_pdw_iii}. So
Theorem~\ref{thm:k}, which is rather a long statement about tiles of
type~2 or type~4, leads to a complete classification of the spherical
tilings by congruent \emph{convex} quadrangles of type~2 or of type~4
over pseudo-double wheels.

\begin{theorem}\label{thm:k}
Assume a chart $\A$ satisfies the following  assumptions:
\begin{enumerate}\renewcommand{\theenumi}{\Roman{enumi}}
\item \label{assumption:2} the map of the chart
     is $\pdw{F}$ for some $F\ge 10$, and

\item \label{assert:alternating} there is $b>0$ such that all the edges $N v_{6i}$, $v_{6i+1} S$
 and $v_{6i+3} v_{6i+4}$ have length $b$ for each nonnegative integer
 $i<F/6$ while the other edges do lengths $\ne b$.
\end{enumerate}
Then there  exists a  spherical tiling $\T$ by congruent
 quadrangles, uniquely up to special orthogonal transformation.
 Moreover the tile is a \emph{concave} quadrangle of type~2, and  the following three conditions hold:
\begin{enumerate}
\item \label{assert:drawn} The tiling $\T$ realizes $\A$, where
the length-assignment and the angle-assignment are
      Figure~\ref{chart:a}~($F=12$ in particular) with the following
      equations:
      \begin{align}
	a&=\arccos\frac{1}{3}, \label{lgh:a}\\
	b&=\arccos\frac{-5}{9},\label{lgh:b}\\
	\alpha&=\arccos\frac{-1}{2\sqrt7},\label{angle:alpha}\\
	\beta&=\frac{\pi}{3},\label{angle:beta}\\
	\gamma&=\frac{4\pi}{3},\label{angle:gamma}\\
	\delta&=\arccos\frac{5}{2\sqrt7}.\label{angle:delta}
      \end{align}

\item \label{thm:l:coordinates} 
Define the spherical polar
coordinate of a point $p$ on the unit sphere to be the pair
$(\theta,\varphi)$ of the length $\theta$
of the geodesic line $N p$ and the \emph{longitude} $\varphi$ of $p$.
The longitude is the angle from the
geodesic line $N v_0$ to $N p$, defined consistently
with the cyclic order of $N$. Let 
\begin{align}
 \phi=\arccos\frac{13}{14}.\label{angle:phi}
\end{align}
Then 
the spherical polar
coordinates of the vertices $v_{i+6}$ is $(\theta,\rho+\pi)$ for $v_i =
      (\theta, \rho )$ ($i=0,1,2,3,4,5$). Moreover  $S=(\pi,0)$, $v_0 = (b, 0)$,  $v_1 = (\pi -
      b, \phi)$, $v_2 = (a, \alpha)$, $v_3 = (\pi - a, \phi + \delta)$,
      $v_4 = (a, \alpha +\beta)$, and $v_5 = (\pi - a, \phi + \delta +\beta)$.

\item \label{D2} The tiling $\T$ has only three perpendicular 2-fold
      rotation axes but no mirror planes. In other words, the
      Sch\"onflies symbol of the tiling is $D_2$. See
 Figure~\ref{fig:perpD2Axis} for the views of $\T$ from the three axes.  
\end{enumerate} 
\renewcommand{\theenumi}{\arabic{enumi}}
\begin{figure}[ht]
\begin{tabular}{l l}
\includegraphics[width=7cm,height=7cm]{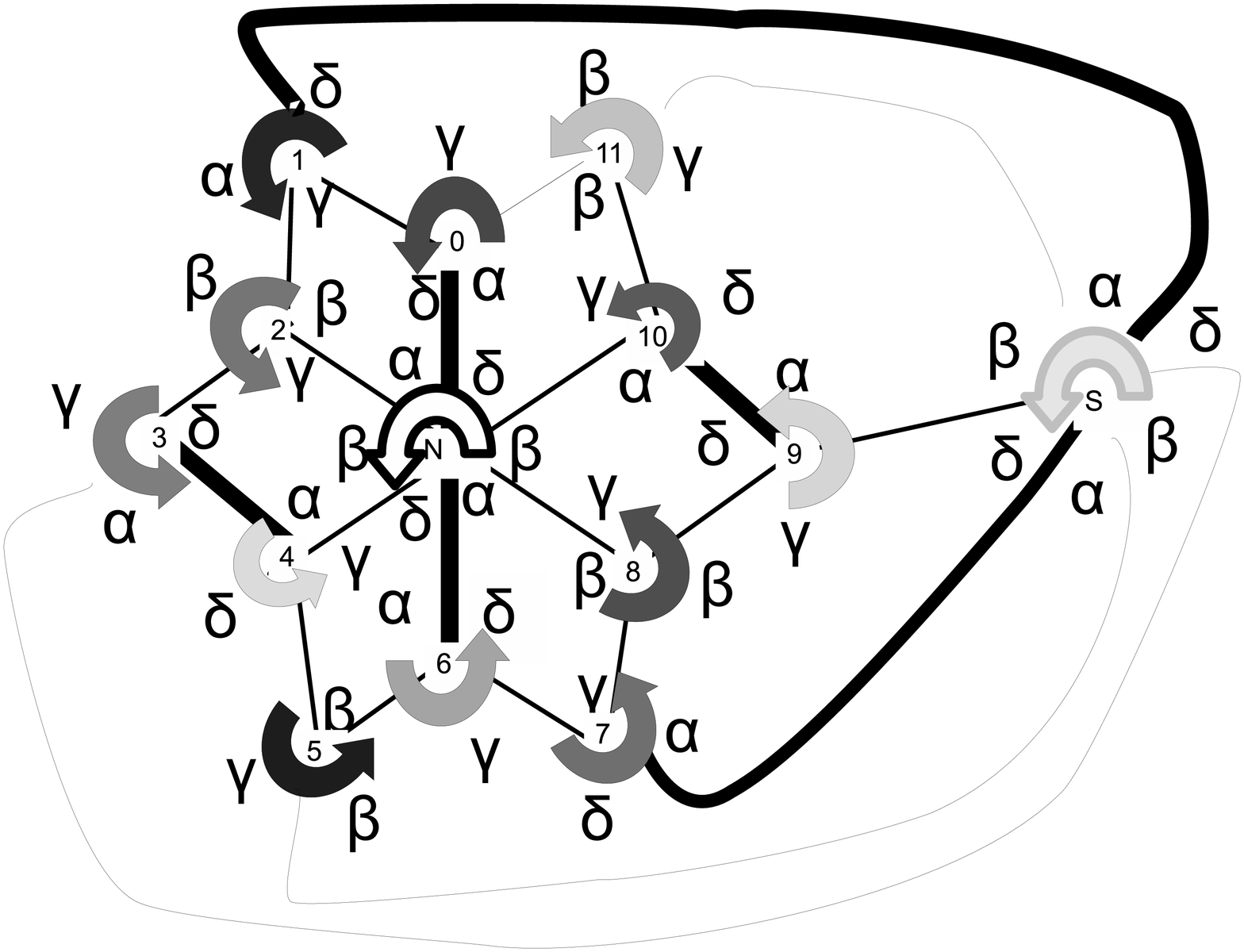}
\end{tabular}
\caption{The chart $\A$ of the tiling.  Thick~(resp. thin) edges of the
 chart correspond to edges of length $b$~(resp. $a$) of the tiling~(see
 Theorem~\ref{thm:k}).  \label{chart:a}}
\end{figure}

\begin{figure}[ht]
\begin{tabular}{c c c}
\includegraphics[width=3cm,scale=0.5]{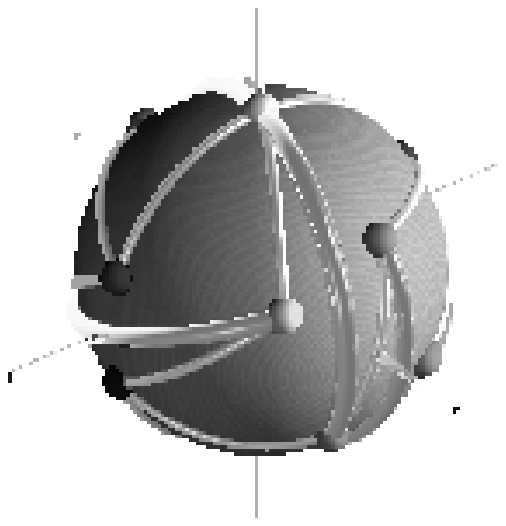}&
\includegraphics[width=3cm,scale=0.5]{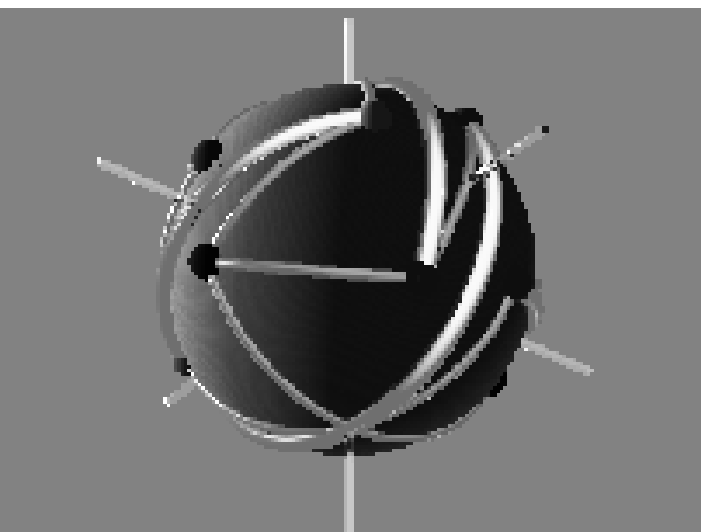}\\
\includegraphics[width=3cm,scale=0.30]{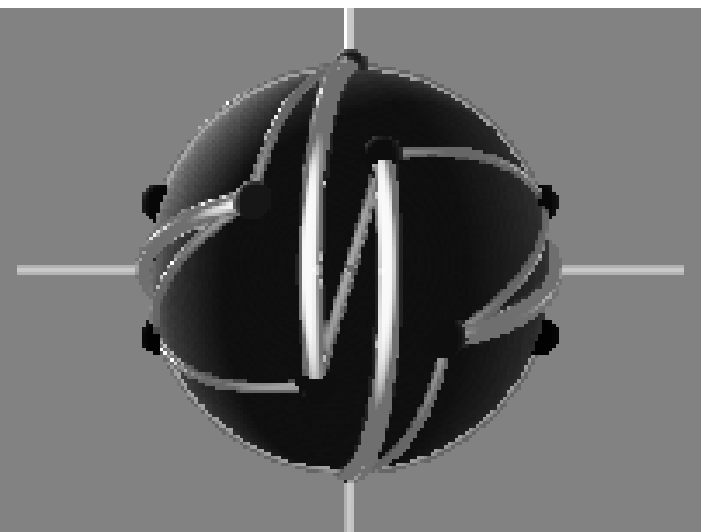}&
\includegraphics[width=3cm,scale=0.30]{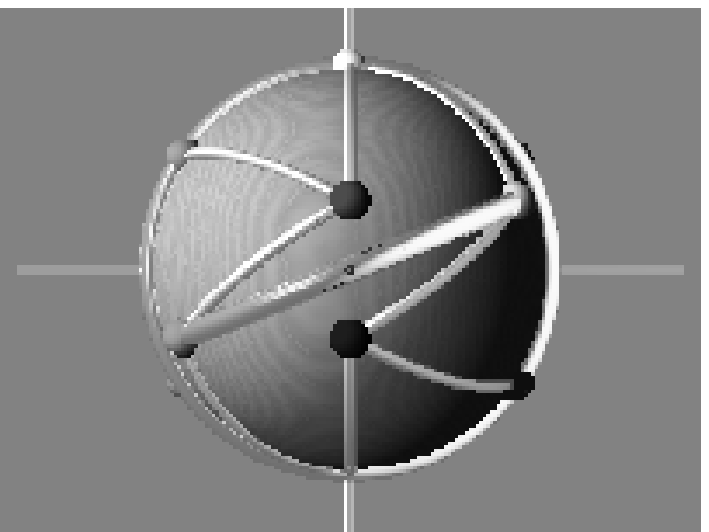}&
\includegraphics[width=3cm,scale=0.30]{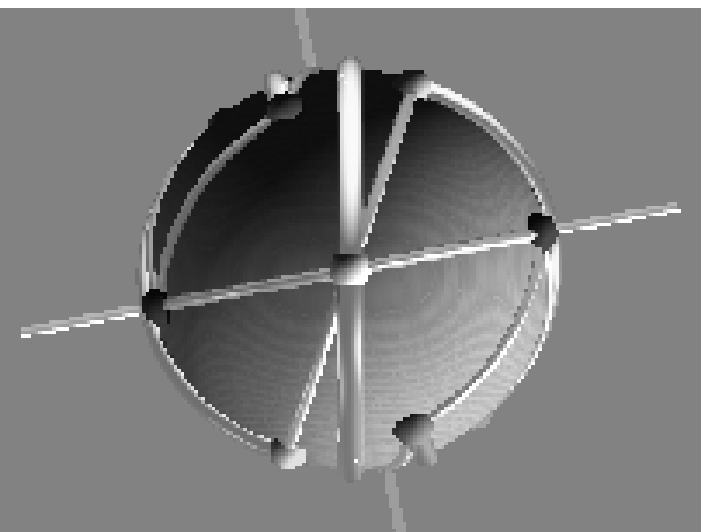}
\end{tabular} 
\caption{The tiling of twelve congruent \emph{concave} quadrangles over
a pseudo-double wheel and the three $2$-fold rotation axes.  The upper
left figure is the view from a general position, and the upper right is
from the antipodal of the first viewpoint. The thick edges are of length
$b$.
The lower left figure is the view from a $2$-fold rotation axis through the midpoint between the vertices $v_0$
 and $v_1$. 
The lower middle figure is from a $2$-fold rotation axis
 through the midpoint between the vertices $v_3$ and $v_4$.
The lower right is from the other $2$-fold rotation axis
 through the poles. In the figure~(lower left), two great circles containing the two middle thick edges
 forms an angle $\phi=\arccos(13/14)$~(see \eqref{angle:phi}). 
 \label{fig:perpD2Axis}}
\end{figure} 
\end{theorem}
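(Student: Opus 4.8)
The plan is to treat the two assumptions \eqref{assumption:2}--\eqref{assert:alternating} as a rigid combinatorial skeleton and then propagate angle and length constraints around $\pdw F$ until the chart is forced to be Figure~\ref{chart:a}, at which point an explicit spherical-trigonometry construction establishes existence and uniqueness. First I would set up the local picture at a vertex $v_{6i+3}v_{6i+4}$: the edge of length $b$ lying on the equatorial cycle together with the neighbouring northern and southern edges of length $b$ (namely $Nv_{6i}$ and $v_{6i+1}S$ in the next block after re-indexing by the $F/6$ shift) creates, by Convention~\ref{conv}, a block of three tiles whose matched edges are completely determined. Within each tile of type~2 or type~4, Figure~\ref{fig:type24_map} dictates which pairs of sides are equal and which are $\ne b$; since exactly one side of each tile has length $b$ (or in a type-4 tile, the two $b$-sides are adjacent), the edge-labels around the equator cycle and the meridian edges are forced to a periodic pattern of period $6$. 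I expect this bookkeeping to go through by a finite case analysis on how a $b$-edge can sit inside a type-2 versus a type-4 tile, eliminating type~4 outright because its $b$-edges must be adjacent whereas Assumption~\eqref{assert:alternating} separates them; this is how one lands on ``the tile is a concave quadrangle of type~2.''

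Next I would push the angle-assignment. The equations \eqref{sys:2} at $N$, at $S$, and at each $v_j$ give a linear system in $\alpha,\beta,\gamma,\delta$ once we know, from the forced edge-pattern and the type-2 incidence rules, which angle-type occupies each corner of each tile. The vertex $N$ is incident to $F/2$ northern edges, so its equation reads (number of $b$-adjacent corners)$\times(\text{one angle type}) + \cdots = 2\pi$; similarly at $S$; and the $3$-valent equatorial vertices give short equations. Solving this linear system should pin down the \emph{affine} relations among $\alpha,\beta,\gamma,\delta$ and, crucially, force $F=12$: for $F>12$ the count of $b$-type corners at $N$ or $S$ overshoots $2\pi$ for any admissible positive angles, because concavity ($\gamma>\pi$) already consumes more than its share. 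That $F=12$ fallout is, I think, the cleanest way to get the ``twelve congruent quadrangles'' conclusion, and it simultaneously collapses the $F/6$-fold hypothesis to a $2$-fold one, foreshadowing the $D_2$ claim.

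With the chart combinatorially rigid, existence reduces to solving the spherical-trigonometry equations for one tile. I would take the concave type-2 quadrangle, split it along the diagonal into two spherical triangles, and impose: the side lengths $a$ on two sides and $b$ on one, the closing-up condition around $N$ and around $S$, and the vertex-type equations. Using Proposition~\ref{prop:vinberg} on each triangle (after checking the four inequalities $-A+B+C<\pi$ etc.\ for the proposed angles) gives existence of the triangles, and the spherical law of cosines then yields the closed forms \eqref{lgh:a}--\eqref{angle:delta} and \eqref{angle:phi}; the specific rational cosines $1/3$, $-5/9$, $13/14$ are exactly what make the system consistent. For uniqueness up to $SO(3)$: place $N$ at the pole and $v_0$ on the prime meridian, which fixes the frame; the forced chart and the now-determined metric data propagate the polar coordinates of $v_1,\dots,v_5$ uniquely as in item~\eqref{thm:l:coordinates}, then the $\rho\mapsto\rho+\pi$ relation extends to $v_6,\dots,v_{11}$, so the whole tiling is determined. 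Finally, for item~\eqref{D2} I would exhibit the three axes: the $z$-axis through $N,S$ realizes the period-$6$ shift composed with a half-turn (a $2$-fold rotation because $F/2=6$ and the pattern has period $6$, so only the half is a symmetry), and the two equatorial axes through the midpoints of $v_0v_1$ and of $v_3v_4$ swap $N$ with $S$; checking these three commute and generate $D_2$, and that no reflection preserves the chirality of the forced chart (here the mirror $\A^R$ is genuinely different, per the ``up to mirror image'' phrasing), gives the Sch\"onflies symbol $D_2$.

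The main obstacle I anticipate is the second paragraph: controlling the global angle-count at the high-valence poles $N$ and $S$ tightly enough to force $F=12$ rather than merely bounding $F$, because the concave angle $\gamma>\pi$ makes naive inequalities loose, and one must combine the pole equations with the $3$-valent equatorial equations (and the type-2 side-equality constraints linking $a$-sides to angles) to get a contradiction for every $F\ge 18$ in the $F/6$-periodic family. The spherical-trigonometry verification of existence is then lengthy but routine once the combinatorics is nailed down.
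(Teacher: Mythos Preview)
Your overall architecture matches the paper's: force the combinatorics of the chart, then do spherical trigonometry for existence, then read off coordinates and symmetry. But two of your key steps rest on claims that do not hold, and the paper proceeds differently at precisely those points.

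\textbf{Elimination of type~4.} You write that in a type-4 tile ``the two $b$-sides are adjacent,'' and use this to rule out type~4. A type-4 quadrangle has edge-lengths $a,a,b,c$ with $a,b,c$ pairwise distinct (see Figure~\ref{fig:type24_map} and the description of $\beta,\delta$ in Lemma~\ref{lem:6.3}); there is only \emph{one} $b$-edge per tile, so your adjacency argument is vacuous. The paper's actual elimination of type~4 is buried inside the proof of Lemma~\ref{lem:6}: after a substantial case analysis on the angle types around $v_{6i}$ and $v_{6i+1}$ (using Lemma~\ref{lem:6.3} to forbid certain $3$-valent vertex types in type~4), one arrives at a configuration in which the edge $v_4 N$ would be forced to have two incompatible lengths. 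This is not a one-line observation; it requires first pinning down the angle-assignment up to the $(\alpha,\beta)\leftrightarrow(\delta,\gamma)$ swap.

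\textbf{Forcing $F=12$.} This is the more serious gap. You propose that for $F>12$ the angle sum at $N$ or $S$ ``overshoots $2\pi$,'' with concavity $\gamma>\pi$ doing the work. But $\gamma$ never appears at the poles: the vertex type of $N$ is $(\alpha+\beta+\delta)\cdot F/6 = 2\pi$, and the combinatorial system (this pole equation together with the area equation~\eqref{eq:area} and $\alpha+\gamma+\delta=2\pi$ at $v_3$) is solvable for \emph{every} multiple of $6$, giving $\beta=4\pi/F$, $\gamma=2\pi-8\pi/F$, $\alpha+\delta=8\pi/F$. No linear angle relation excludes $F=18,24,\ldots$; your own final paragraph concedes this is the weak point. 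The paper's mechanism is \emph{metric}, not combinatorial: one first argues geometrically that $\cos a>0$ (if $a\ge\pi/2$ then $v_2,v_4$ lie in the southern hemisphere and $v_3$ in the northern, forcing $\gamma=\angle v_3v_2N\le\pi$, contradicting $\gamma>\pi$), and then the spherical cosine law in the triangle $N v_2 v_3$ with $Nv_3=\pi-a$ yields
\[
\cos a = -\,\frac{\cos(8\pi/F)}{1-\cos(8\pi/F)}.
\]
Positivity of the left side forces $\cos(8\pi/F)<0$, hence $F<16$; combined with $6\mid F$ and $F\ge 12$ this gives $F=12$. Without this trigonometric step your argument cannot close, and nothing in your sketch substitutes for it.

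Once these two points are repaired along the paper's lines, the remainder of your outline (Proposition~\ref{prop:vinberg} for existence of the tile, polar-coordinate placement for uniqueness, and exhibiting the three $2$-fold axes via the automorphisms $f,g,g\circ f$ for the $D_2$ symbol) is essentially what the paper does in Sections~\ref{sec:exist}--\ref{subsec:symmetry}.
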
 

The proof of Theorem~\ref{thm:k} is organized as follows: In
Section~\ref{sec:proof}, by assuming the existence of such a tiling
$\T$, we prove the assertions~\eqref{assert:drawn} and \eqref{D2}
relative to the inner angle $\alpha$ and the number $F$ of tiles, and
prove that the tile is of a concave quadrangle of type~2.
In Section~\ref{subsec:chart}, we determine the absolute values of all
the inner angles, all the edge-lengths of the tile, and $F$.  In
Section~\ref{sec:exist}, by those values, we prove that the tile indeed
exists as a spherical quadrangle, by using
Proposition~\ref{prop:vinberg}.  In Section~\ref{subsec:coordinates}, we
prove the assertion~\eqref{thm:l:coordinates}. In
Section~\ref{subsec:symmetry}, we complete the proof of
Theorem~\ref{thm:k} by establishing the assertion~\eqref{D2}.

\section{Which angles and which edges are equal?}
\label{sec:proof}

We  answer the question in this section by manipulations of systems of equations of
the form \eqref{sys:2} and by elementary geometry.

By an \emph{automorphism} of a map $M$, we mean any
automorphism~\cite[Section~1.1]{MR2744811} $h$ of the graph that
preserves the cyclic orders of the vertices. Here we let $h$ send any
angle $\angle u v w$ to $\angle h(u) h(v) h(w)$.  If $\A=(M,L,K)$ is a
chart and $h$ is an automorphism of the map $M$, then let $h(\A)$ be a
chart $(M,\ L\circ h^{-1},\ K\circ h^{-1})$.

\begin{definition}\label{def:fg}
Let $\A=(M,L,K)$ be a chart satisfying the assumptions of Theorem~\ref{thm:k}, define
 two automorphisms $f$ and $g$ on the map $M$ of $\A$ as follows:
\begin{align}
 f(N)=S,\quad f(S)=N,\quad f(v_i)=v_{1-i}\ (0\le i\le
 F-1); \label{auto:f}\\
 g(N)=N,\quad g(S)=S,\quad g(v_i)=v_{i+6}\ (0\le i\le
 F-1). \label{auto:g}
\end{align}
Here the suffix $i$ of the vertex $v_i$ is understood modulo $F$ as
before.  Then we can prove that both of $f$ and $g$ are indeed automorphisms.\end{definition}

\begin{lemma}\label{lem:combinatorial}
Suppose some tiling $\T$ realizes a chart $\A=(M,L,K)$ satisfying the assumptions of
 Theorem~\ref{thm:k}.
Then,
\begin{enumerate}
\item \label{multiple6} The number $F$ of faces is a multiple of 6
      greater than or equal to 12.

\item \label{type2} The tile is a quadrangle of type~2.

\item \label{invariance}
$\A= f(\A)= g(\A)$.

\item \label{thm:k:case1}
The inner angles satisfy
$\beta=\frac{4\pi}{F},\quad
\gamma=2\pi-\frac{8\pi}{F}>\pi,\quad
\delta= \frac{8\pi}{F} -\alpha$.

\item \label{fundamental}
$K$ has the same action as in
 Figure~\ref{chart:a} on the angles $\angle v_{-2} N v_0$, $\angle v_0
      N v_2$, $\angle v_2 N v_4$ and on all the angles around the vertices
 $v_0, v_2, v_3$. Especially, 
\begin{align}
 \angle v_3 v_2 N = 2\pi - \frac{8\pi}{F}>\pi,\quad \angle v_2 N v_4 =
 \frac{4\pi}{F}, \quad \angle N v_4 v_3 = \alpha. \label{32N2N4}
\end{align}
\end{enumerate}

\end{lemma}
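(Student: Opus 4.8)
The plan is to work entirely with the combinatorial constraints coming from the equations \eqref{sys:2}, the type~2/type~4 face constraints of Figure~\ref{fig:type24_map}, and elementary spherical geometry, exploiting the periodicity forced by Assumption~\eqref{assert:alternating}. First I would examine a face incident to a northern edge of length $b$. Since $b$ appears on exactly one edge of each ``special'' period-6 block on the $N$-side, and since opposite edges of a type~2 or type~4 quadrangle carry the pattern shown in Figure~\ref{fig:type24_map}, one traces which of the two patterns is consistent with the hypothesis that the $b$-edges sit on northern, southern, and one non-meridian edge per block, with all other edges of length $\ne b$. Matching the edge-length word around a single tile against both the type~2 and type~4 templates should rule out type~4 (the positions of the $b$-edge relative to the $a$- and $c$-edges are incompatible with the type~4 cyclic pattern), giving assertion~\eqref{type2}. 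The same local analysis pins down, face by face, the angle-assignment $K$ on the tiles adjacent to $v_0,v_2,v_3$ and on the angles $\angle v_{-2}Nv_0,\ \angle v_0Nv_2,\ \angle v_2Nv_4$, which is assertion~\eqref{fundamental}; here one uses that a type~2 tile has its four angles in a fixed cyclic arrangement relative to its edge-lengths, so once the edge word of a tile is fixed the angle word is determined up to reflection, and the reflection is killed by the orientation (cyclic-order) constraint in the definition of ``realizes.''

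Next I would feed the now-known angle types into the vertex equations \eqref{sys:2}. The vertex $N$ has $F/2$ incident edges, and by the period-6 structure its angle word repeats with period $3$ in the list of $F/2$ angles; summing $K$ over $A_N$ and setting it equal to $2\pi$ gives one linear relation among $\alpha,\beta,\gamma,\delta$ and $F$. The vertices $v_0,v_1,v_2,v_3$ (and their $g$-translates, which carry the same data by the periodicity) each contribute a further relation via \eqref{sys:2}, and the type~2 face relation $\alpha+\beta+\gamma+\delta = 2\pi + (\text{area})$ together with the edge-matching constraints supplies the rest. Solving this small linear system yields $\beta = 4\pi/F$, $\gamma = 2\pi - 8\pi/F$, and $\delta = 8\pi/F - \alpha$, i.e.\ assertion~\eqref{thm:k:case1}; the inequality $\gamma > \pi$ is then just $F > 8$, and combined with the divisibility coming from the period-6 hypothesis (which only makes sense when $6 \mid F$) and $F \ge 10$ we get $F$ a multiple of $6$ with $F \ge 12$, which is assertion~\eqref{multiple6}. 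The displayed equalities \eqref{32N2N4} are immediate substitutions into the already-identified angle types at $v_2$, $N$, and $v_4$.

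For assertion~\eqref{invariance}, $\A = g(\A)$ is essentially a restatement of the period-6 hypothesis: $g$ shifts indices by $6$, and Assumption~\eqref{assert:alternating} together with the uniqueness of the angle-assignment forced by the face/vertex constraints shows $L\circ g^{-1} = L$ and $K\circ g^{-1} = K$. For $f$ one must check that the map $\pdw{F}$ admits the index-reversing, pole-swapping automorphism \eqref{auto:f} (this is a direct verification against the definition of $\pdw{F}$, swapping the roles of the $v_{2i}$ and $v_{2i+1}$, hence of northern and southern edges), and then that $L$ and $K$ are invariant under it; invariance of $L$ follows because $f$ sends the $b$-edge triple $\{Nv_{6i},\ v_{6i+1}S,\ v_{6i+3}v_{6i+4}\}$ to $\{Sv_{1-6i},\ v_{-6i}N,\ v_{-2-6i}v_{-3-6i}\}$, which is again a $b$-edge triple after reindexing, and invariance of $K$ follows from the uniqueness of $K$ once $L$ is known (any automorphism fixing $L$ must fix the unique $K$ solving the constraints). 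The step I expect to be the main obstacle is the face-by-face bookkeeping that simultaneously excludes type~4 and nails down $K$ near $v_0,v_2,v_3$: one has to be careful that the cyclic orientation convention is used correctly so that the two mirror-image angle words of a type~2 tile are not conflated, and that the interaction between the $b$-edges on northern, southern, and non-meridian edges does not leave room for an alternative consistent labelling. The rest is linear algebra over the vertex equations and a routine automorphism check.
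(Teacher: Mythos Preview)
Your broad outline is right, but there is a real gap at the step where you claim that ``once the edge word of a tile is fixed the angle word is determined up to reflection, and the reflection is killed by the orientation (cyclic-order) constraint.'' The orientation of the map fixes a cyclic sense on each face, but it does \emph{not} choose between the two mirror placements of a type~2 (or type~4) tile inside that face. Concretely, for the face $N v_0 v_1 v_2$ with $b$-edge $N v_0$, the angle word along $(N,v_0,v_1,v_2)$ could be either $(\alpha,\delta,\gamma,\beta)$ or $(\delta,\alpha,\beta,\gamma)$, and nothing local rules out either; similarly at $v_{6i}$ the two angles flanking the $b$-edge $N v_{6i}$ can independently be $\alpha$ or $\delta$. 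The paper spends most of Section~\ref{sec:proof} resolving exactly this ambiguity: it first proves $\alpha\ne\delta$ and $\beta\ne\gamma$ for type~2 (Lemma~\ref{lem:u}, via Lemma~\ref{lem:isoscelestrapezoid2} and Lemma~\ref{lem:aux}), and then in Lemma~\ref{lem:lm} eliminates the bad local patterns (``Property~1'', ``Property~2'' and their conjugates) by propagating vertex-type equations until a contradiction with $F\ge10$ or with $\alpha\ne\delta$ appears. Without those steps you do not get a unique $K$, and hence your later arguments for assertion~\eqref{invariance} (``any automorphism fixing $L$ must fix the unique $K$'') and for the explicit angle values collapse.

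A second, related gap is the exclusion of type~4. A face with one $b$-edge and three non-$b$ edges is compatible with \emph{both} the type~2 pattern $aaab$ and the type~4 pattern $aabc$ (the hypothesis only says the other three edges are $\ne b$, not that they are equal), so ``matching the edge-length word around a single tile'' does not decide the type. In the paper, type~4 is ruled out only after the vertex-type analysis, using Lemma~\ref{lem:6.3} (no $3$-valent vertex of type $Z+2\delta$ or $Z+\beta+\delta$ in type~4) together with the angle assignments forced by Lemma~\ref{lem:lm}. Your plan needs an argument of comparable strength at this point; the linear algebra on \eqref{sys:2} and \eqref{eq:area} that you outline is fine, but it only becomes available once the angle types around $N$ and around $v_0,\ldots,v_5$ have been pinned down, and that is precisely the nontrivial part.
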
 

To prove the assertion~\eqref{multiple6}, assume $F$ is not a multiple
of 6. Then the edge $N v_2$ or $N v_4$ should have length $b$, which
contradicts against Assumption~\eqref{assert:alternating} of
Theorem~\ref{thm:k}. By this and Assumption~\eqref{assumption:2} of
Theorem~\ref{thm:k},  the assertion~\eqref{multiple6} of
Lemma~\ref{lem:combinatorial} follows.

\subsection{Basic general lemmas}
To prove the other assertions of Lemma~\ref{lem:combinatorial}, we associate to the chart in
question a  system of equations and inequalities about the
tile's inner angles $\alpha,\beta,\gamma,\delta$. The system consists of the equations \eqref{sys:2}, inequalities
$0<\alpha,\beta,\gamma,\delta<2\pi$, and an equation
\begin{align}
 \alpha+\beta+\gamma+\delta -2\pi = \frac{4\pi}{F}.\label{eq:area}
\end{align}
The last equation is because all the $F$ tiles are congruent quadrangles
and because the area of a spherical triangle is the sum of the inner
angles subtracted by $\pi$ according to \cite{MR1254932}.

If the system of equations and inequalities associated to a chart $\A$
is unsolvable, no tiling realizes $\A$. To show that such systems are
unsolvable, we  use the following lemmas.

\begin{lemma} \label{lem:6.2}In a quadrangle of type~2, no inner angle
 is equal to the opposite inner angle~(i.e.\@ $\beta\ne\delta$ and
$\alpha\ne\gamma$). In a quadrangle of type~4, we have
$\alpha\ne\gamma$.
\end{lemma}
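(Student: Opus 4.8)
The plan is to argue by contradiction using the spherical law of cosines on a diagonal. Write $P_1P_2P_3P_4$ for the quadrangle, with interior angles $\alpha,\beta,\gamma,\delta$ at $P_1,P_2,P_3,P_4$ respectively, and with the four edge-lengths labelled by $a,b,c$ in the cyclic pattern that Figure~\ref{fig:type24_map} prescribes for the type in question. Suppose first that $\alpha=\gamma$. I would cut along the diagonal $P_2P_4$: since it is opposite to both $P_1$ and $P_3$, the full angle $\alpha$ appears in $\triangle P_1P_2P_4$ and the full angle $\gamma$ in $\triangle P_3P_2P_4$ (this stays true even when the tile is concave at $P_1$ or $P_3$, because only $\cos\alpha,\cos\gamma$ will enter and $\cos$ is insensitive to replacing an angle by $2\pi$ minus it). Applying the spherical law of cosines to the common side $P_2P_4$ in each of the two triangles gives two expressions for $\cos(P_2P_4)$ which, together with $\cos\alpha=\cos\gamma$, yield a single \emph{linear} equation for $\cos\alpha$; solving it exhibits $\cos\alpha$ as an explicit rational function of $\cos a,\cos b,\cos c,\sin a,\sin b,\sin c$ (in some cases one of the two sub-triangles is isosceles in the repeated edge-length, which simplifies this function further by sum-to-product identities).

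The next step is to impose $-1\le\cos\alpha\le1$. Clearing denominators turns this into a trigonometric inequality in $a,b,c$ whose left-hand side factors; one factor vanishes exactly when two of $a,b,c$ coincide and hence has a definite sign because $a,b,c$ are pairwise distinct, while the sign of the remaining factor is pinned down by the relation among $a,b,c$ that is built into the definition of the type (and, if needed, by Agaoka's necessary trigonometric inequalities for such a quadrangle to exist, \cite[Proposition~3]{agaoka:quad}). One concludes that $|\cos\alpha|>1$, which is absurd. The borderline case $|\cos\alpha|=1$ would force the diagonal $P_2P_4$ to pass through a vertex, so that the boundary of the tile would not be a union of four distinct geodesic lines, contradicting the definition of a spherical quadrangle. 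This rules out $\alpha=\gamma$ for both type~2 and type~4. For type~2 one then repeats the identical argument along the other diagonal $P_1P_3$, so that the full angles $\beta$ and $\delta$ appear, to exclude $\beta=\delta$. For type~4 the same computation applied to $\beta$ and $\delta$ need not yield $|\cos\beta|>1$, consistently with the lemma asserting only $\alpha\ne\gamma$ in that case.

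The step I expect to be the real obstacle is the sign analysis in the second paragraph: to reach $|\cos\alpha|>1$ one must control the signs of several differences of cosines and of sines of $a,b,c$ — equivalently, know how $a,b,c$ compare with one another and whether sums such as $a+b$ or $b+c$ exceed $\pi$ — and it is exactly here, rather than from the bare cyclic edge pattern, that the structural hypotheses defining type~2 and type~4 must be used. A minor separate point: if a denominator in the rational expression for $\cos\alpha$ vanishes, the linear equation for $\cos\alpha$ is either inconsistent (an immediate contradiction) or degenerate, and the degenerate subcase again corresponds to a diagonal through a vertex and is excluded.
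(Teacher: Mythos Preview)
Your approach has a genuine gap: the inequality $|\cos\alpha|>1$ that you hope to derive is false in general. Carry out your own computation for type~2 (edges $P_1P_2=P_2P_3=P_3P_4=a$, $P_4P_1=b$) along your chosen diagonal $P_2P_4$: equating the two cosine-law expressions for $\cos(P_2P_4)$ under $\alpha=\gamma$ and applying the sum-to-product identities you mention yields
\[
\cos\alpha \;=\; \cot a\,\tan\frac{a+b}{2}.
\]
For $a=\pi/2$ and any $b\ne\pi/2$ in $(0,\pi)$ this equals $0$, and more generally there is an open set of pairs $(a,b)$ with $a\ne b$ on which the right-hand side lies strictly in $(-1,1)$. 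So the sign analysis you flag as the obstacle cannot be completed from this equation alone, and invoking the existence inequalities of \cite{agaoka:quad} at this point would be circular or at best a substantial detour.

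What goes wrong is the choice of diagonal; with the correct choice the paper's proof is a one-line isosceles argument. For $\alpha=\gamma$, cut along $P_1P_3$ instead: the triangle $P_1P_2P_3$ has its two sides at $P_2$ equal to $a$, hence equal base angles $\angle P_2P_1P_3=\angle P_2P_3P_1$. Subtracting from $\alpha=\gamma$ gives $\angle P_4P_1P_3=\angle P_4P_3P_1$, so the triangle $P_1P_4P_3$ is also isosceles and $P_4P_1=P_4P_3$; this forces $b=a$ in type~2 and $b=c$ in type~4, contradicting the distinctness of the edge-lengths. The case $\beta=\delta$ in type~2 is handled symmetrically by the diagonal $P_2P_4$, since $P_2P_3P_4$ is then the isosceles triangle with base angles at $P_2$ and $P_4$, and one concludes $a=b$ from the other triangle. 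No law of cosines, no sign analysis, and no external inequalities are needed.
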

\begin{proof} Draw a diagonal line in the tiles, and then argue 
with the isosceles triangle(s).  \end{proof}

\begin{lemma}\label{lem:6.3}If a tiling by congruent quadrangle of
 type~4 realizes a chart, then, for each $Z\in
 \{\alpha,\beta,\gamma,\delta\}$, the chart has no 3-valent vertex of type 
 $Z+2\delta$ or $Z+\beta+\delta$.\end{lemma}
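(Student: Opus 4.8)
The plan is to argue locally at a $3$-valent vertex, using only that abutting tiles must agree on edge lengths together with a look at Figure~\ref{fig:type24_map}(right). I would begin by assuming, for contradiction, that the chart has a $3$-valent vertex $v$ of type $Z+2\delta$ or $Z+\beta+\delta$ for some $Z\in\{\alpha,\beta,\gamma,\delta\}$, and fixing a tiling $\T$ that realizes the chart. Exactly three edges of $\T$ meet at the image of $v$; listing them in cyclic order, let $\ell_1,\ell_2,\ell_3\in\{a,b,c\}$ be their lengths. They cut a small disc around that point into three sectors, each sector being a single corner of one tile, so three tiles meet there; the tile occupying the sector between the edges of lengths $\ell_i$ and $\ell_{i+1}$ is congruent to the master type-$4$ quadrangle $Q$ and meets the vertex at one of its corners, where its two sides are exactly those two edges. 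Writing $P(\alpha),\dots,P(\delta)$ for the unordered pair of side-lengths at the vertex of $Q$ that carries $\alpha,\dots,\delta$, the three corners at our vertex carry the angles prescribed by its type, so the multiset $\{\{\ell_1,\ell_2\},\{\ell_2,\ell_3\},\{\ell_3,\ell_1\}\}$ must equal $\{P(Z),P(\delta),P(\delta)\}$ in the first case and $\{P(Z),P(\beta),P(\delta)\}$ in the second.

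Next I would read $P(\alpha),\dots,P(\delta)$ off Figure~\ref{fig:type24_map}(right): in a type-$4$ quadrangle the two sides meeting at any vertex have different lengths, one of them being the repeated length, so each $P(\cdot)$ is $\{a,b\}$ or $\{a,c\}$, and in particular $P(\beta)\ne P(\delta)$. The proof then finishes by a short finite check on $(\ell_1,\ell_2,\ell_3)$. In the case $Z+2\delta$, two of the consecutive pairs $\{\ell_i,\ell_{i+1}\}$ equal $P(\delta)$, say both equal $\{a,c\}$; running over the possibilities for $(\ell_1,\ell_2,\ell_3)$ forces the third consecutive pair to be $\{a,a\}$ or $\{c,c\}$, which lies in neither $\{a,b\}$ nor $\{a,c\}$ and so equals no $P(\cdot)$ --- a contradiction. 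In the case $Z+\beta+\delta$, the three consecutive pairs include both $P(\beta)$ and $P(\delta)$, which, being distinct members of $\{\{a,b\},\{a,c\}\}$, are $\{a,b\}$ and $\{a,c\}$ in some order; since any two of the three consecutive pairs share one of the $\ell_i$, that common length is forced to be $a$, hence $\{\ell_1,\ell_2,\ell_3\}=\{a,b,c\}$ and the remaining pair is $\{b,c\}$ --- again no $P(\cdot)$. Either way $v$ cannot exist, which is the assertion.

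The hard part, such as it is, is the local set-up in the first step rather than the combinatorics: I must make sure that a $3$-valent vertex of the tiling genuinely reads off as the ``triangle'' of flanking pairs $\{\ell_1\ell_2,\ell_2\ell_3,\ell_3\ell_1\}$. What has to be checked is that each of the three incident tiles touches the vertex in exactly one corner --- so it contributes a single one of $\alpha,\beta,\gamma,\delta$, consistently with the vertex type being a sum of three of these --- and that the two sides of that tile at the corner are two distinct tiling-edges, namely the pair bounding its sector. These are standard local features of a spherical tiling by (possibly concave) quadrangles; granted them, the rest uses no metric input beyond the fact that $a,b,c$ are pairwise distinct.
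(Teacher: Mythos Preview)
Your framework---encoding a $3$-valent vertex by the cyclic triple $(\ell_1,\ell_2,\ell_3)$ of incident edge lengths and matching the consecutive pairs $\{\ell_i,\ell_{i+1}\}$ against the flanking-side pairs $P(\alpha),\dots,P(\delta)$---is precisely the paper's approach. The flaw is that you have misread Figure~\ref{fig:type24_map}(right): in the paper's type-$4$ quadrangle the two edges of length $a$ are \emph{adjacent}, not opposite, so that $P(\beta)=\{a,a\}$ and $P(\delta)=\{b,c\}$ (the paper records exactly this in its own proof of the lemma). Your assertion that ``the two sides meeting at any vertex have different lengths'' and that every $P(\cdot)$ lies in $\{\{a,b\},\{a,c\}\}$ is therefore false for the paper's type~$4$; as written, your case analysis establishes the lemma for a different quadrangle (edge pattern $a,b,a,c$ rather than $a,a,b,c$).

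With the correct pairs in hand your method goes through almost verbatim and becomes the paper's proof. For $Z+2\delta$: two of the consecutive pairs equal $\{b,c\}$, which forces the third pair to be $\{b,b\}$ or $\{c,c\}$, and neither is any $P(\cdot)$ (the only ``repeated'' pair among the $P$'s is $P(\beta)=\{a,a\}$). For $Z+\beta+\delta$: the pairs $P(\beta)=\{a,a\}$ and $P(\delta)=\{b,c\}$ are disjoint, yet at a $3$-valent vertex any two of the three consecutive pairs share an element; hence $\beta$ and $\delta$ cannot both appear there, regardless of $Z$.
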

\begin{proof}Recall  $\delta$ is the type of an angle between an edge of length $b$
and that of length $c$. Because $b\ne c$, if a 3-valent vertex has vertex
type $2\delta+Z$ for some type $Z$, then $Z$ is the type of an angle
between two edges of length $b$, or is that of an angle between two
edges of length $c$. But such an angle does not exists in any
quadrangles of type~4.

$\beta$ is the type of an angle between two edges of length
 $a$. $\delta$ is the type of an angle between an edge of length $b$ and
 that of length $c$. By
$c\ne a\ne b$, no edge of the angle of type $\beta$ does not match
an edge of the angle of type $\delta$. Thus an angle of type $\delta$
cannot be adjacent to an angle of type $\beta$ in a quadrangle of
type~4. Hence a vertex of type $Z+\beta+\delta$ is impossible.
\end{proof}

\def\vA{\alpha}
\def\vB{\beta}
\def\vC{\gamma}
\def\vD{\delta}

\begin{lemma}\label{lem:isoscelestrapezoid2}
For any quadrangle of type~2,  $\alpha=\delta$ if and only if $\beta=\gamma$.
\end{lemma}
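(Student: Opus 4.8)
The plan is to use that, combinatorially, a type-2 tile is a spherical ``isosceles trapezoid.'' Write the tile as a quadrangle $ABCD$ whose three equal sides are $AB=BC=CD=a$ and whose fourth side is $DA=b\ne a$, with $\alpha=\angle A$, $\beta=\angle B$, $\gamma=\angle C$, $\delta=\angle D$ as in Figure~\ref{fig:type24_map}~(left). Then $\alpha,\delta$ are the two angles at the ends of the side $DA$, the angles $\beta,\gamma$ are those at the ends of the opposite side $BC$, and the two ``legs'' $AB$ and $CD$ have the common length $a$. So $\alpha=\delta\iff\beta=\gamma$ is exactly the statement that the quadrangle is an isosceles trapezoid in one pair of base angles precisely when it is in the other, and I would prove both implications with the same tool: reflection in the perpendicular bisector of one of the sides $DA$ or $BC$.

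For ``$\alpha=\delta\Rightarrow\beta=\gamma$'', let $r$ be the reflection of the sphere in the great circle $m$ that perpendicularly bisects the arc $DA$. It interchanges $A$ and $D$, maps the great circle $\ell\supset DA$ onto itself, and preserves each of the two hemispheres bounded by $\ell$. Because the arc $DA$ lies on the boundary of the connected interior of the tile, that interior lies to one fixed side of $\ell$ along $DA$, hence near both $A$ and $D$; therefore $r$ carries the local data at $A$ --- the directed arc from $A$ to $D$, the arc $AB$ of length $a$, and the side of $\ell$ carrying the interior --- onto the corresponding data at $D$. Since the inner angle at $A$ equals that at $D$ by hypothesis, this forces $r(B)=C$ and $r(C)=B$, so $r$ is a symmetry of $ABCD$ and $\beta=\angle B=\angle C=\gamma$. (Alternatively, once $r(B)=C$ one notes that $r$ exchanges the diagonals $BD$ and $CA$, so $|BD|=|CA|$; then $\triangle CBD$ and $\triangle BCA$ both have side-lengths $a,a,|BD|$, are congruent by the spherical $SSS$ criterion, and have equal angles opposite the side of length $|BD|$, which is $\gamma=\beta$.) The converse is symmetric: assuming $\beta=\gamma$, the reflection in the perpendicular bisector of $BC$ interchanges $B$ and $C$ and, by the same argument using the equal legs $BA=CD=a$, is a symmetry of $ABCD$, so $\alpha=\delta$.

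A purely trigonometric variant is also available and may be shorter to write: splitting $ABCD$ by the diagonal $BD$ yields the isosceles triangle $\triangle CBD$ (with $CB=CD=a$) and the triangle $\triangle ABD$ (with $AB=a$, $AD=b$); equating the two spherical law-of-cosines values of $\cos|BD|$ gives $\cos a\cos b+\sin a\sin b\cos\alpha=\cos^2 a+\sin^2 a\cos\gamma$, and the analogous computation with the diagonal $AC$ gives $\cos^2 a+\sin^2 a\cos\beta=\cos a\cos b+\sin a\sin b\cos\delta$; adding these and cancelling $\sin a>0$ yields $\sin b\,(\cos\alpha-\cos\delta)=\sin a\,(\cos\gamma-\cos\beta)$, so $\cos\alpha=\cos\delta$ iff $\cos\beta=\cos\gamma$. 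The price of this route is that one must still pass from equality of cosines to equality of angles, and this is exactly where the concavity central to this paper bites: the spurious possibility $\alpha+\delta=2\pi$ with $\alpha\ne\delta$ (a reflex angle at $A$ or $D$) has to be excluded, using that a spherical quadrangle of area below $2\pi$ cannot have two reflex angles. I expect this reflex-angle bookkeeping --- and, in the synthetic proof, the parallel care needed to confirm that the reflection really is a symmetry when one inner angle exceeds $\pi$ --- to be the only delicate point; the rest is routine spherical geometry, in the spirit of the proof of Lemma~\ref{lem:6.2}.
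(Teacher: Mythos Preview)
Your reflection argument is correct and is a genuinely different route from the paper's.  The paper never invokes a global isometry of the tile; instead it argues entirely with congruent sub-triangles.  For $\alpha=\delta\Rightarrow\beta=\gamma$, the paper splits the tile along the diagonal $BD$ and along the diagonal $AC$: the SAS congruence $\triangle ABD\cong\triangle DCA$ (common side $AD$, equal legs $a$, equal included angles $\alpha=\delta$) gives $|BD|=|AC|$ and matching base angles, and then the two isosceles triangles $\triangle BCD$ and $\triangle ABC$ (each with two sides $a$) let one add up the pieces to $\beta=\gamma$.  For the converse, the paper uses the SAS congruence $\triangle ABC\cong\triangle DCB$ (common side $BC$), gets equal diagonals, introduces their intersection point $P$, and finishes via the isosceles triangles $\triangle PBC$ and $\triangle PAD$.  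Your reflection in the perpendicular bisector of $AD$ (or $BC$) packages all of this into one symmetry, and treats the two implications on an equal footing; your parenthetical SSS remark is essentially the paper's first step seen through the lens of $|BD|=|AC|$.

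One comment on the delicate point you flag.  The paper's own proof is no more careful than yours about reflex angles: the angle-addition lines such as $\beta=\angle ABD+\angle DBC$ tacitly assume the diagonals lie inside the quadrangle, which is exactly the same concavity bookkeeping you identify.  So your caution is well placed, but it is not an extra burden your approach carries relative to the paper's---both proofs share it.  Your trigonometric variant is a nice alternative, and your diagnosis of its residual gap ($\cos\alpha=\cos\delta$ versus $\alpha=\delta$) is accurate; if you pursue it, note that the paper's definition only bounds the area below $2\pi$, so at most one inner angle can be reflex (else $\alpha+\beta+\gamma+\delta>4\pi$), which is enough to dispose of the spurious case $\alpha+\delta=2\pi$, $\beta+\gamma=2\pi$ with $\alpha\ne\delta$.
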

\begin{proof}
Assume $\alpha = \delta$. We identify the vertex of the quadrangle
with the inner angle. Because the quadrangle 
$\alpha\beta\gamma\delta$ is of type~2,  the two edges $\vA \vB$ and $\vC \vD$
have length $a$, and then the triangle $\vA \vB \vD$ is congruent to the triangle
$\vD \vC \vA$. Thus $\angle \vB \vD \vA = \angle \vC \vA
\vD$. As $\alpha = \delta$, we have $\angle \vB \vA \vC = \angle \vC
\vD \vB$. Hence $\angle \vC \vB \vD = \angle \vC \vD \vB = \angle \vB
\vA \vC = \angle \vB \vC \vA$. Thus $\beta = \angle \vA \vB \vD + \angle
\vC \vB \vD = \angle \vD \vC \vA + \angle \vB \vC \vA = \gamma$.
Conversely, assume  the inner angle $\beta$ is equal to the inner
angle $\gamma$. As the triangle $\vA \vB \vC$ is congruent to the
triangle $\vD \vC \vB$, we have $\angle \vB \vA \vC = \angle \vB \vC \vA
= \angle \vC \vB \vD = \angle \vC \vD \vB$. Let the point $P$ be shared
by the two diagonal segments $\alpha\gamma$ and $\beta\delta$ of the
quadrangle. As $\angle P\vB \vC = \angle P\vC \vB$, the length of the
edge $\vB P$ is equal to that of the edge $\vC P$. Since the length of
the diagonal $\vA \vC$ is equal to that of the other diagonal $\vD \vB$,
we have $P \vA = P \vD$, by which the triangle $\vA P \vD$ is an
isosceles triangle. So $\angle \vB \vA \vC = \angle \vC \vD
\vB$. Thus the inner angle $\alpha$ is equal to the inner angle $\delta$.  
\end{proof}

\begin{lemma}\label{lem:aux}
Suppose a tiling by congruent quadrangles of type~2 realizes a chart
such that
(1) there is a vertex incident only to three edges of length $a$, and
(2) there is a 3-valent vertex incident to two edges
      of length $a$ and to one edge of length $b$.
Then  $\alpha\ne\delta$ and $\beta\ne\gamma$. 
\end{lemma}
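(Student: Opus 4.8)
The plan is to argue by contradiction, combining the equivalence of Lemma~\ref{lem:isoscelestrapezoid2} with the area equation~\eqref{eq:area}. Suppose for contradiction that $\alpha=\delta$; then by Lemma~\ref{lem:isoscelestrapezoid2} we also have $\beta=\gamma$, so the quadrangle of type~2 is an isosceles trapezoid with $\alpha=\delta$ and $\beta=\gamma$, and the two edges of length $a$ are the ``legs'' $\alpha\beta$ and $\gamma\delta$. The idea is that hypothesis~(1) and hypothesis~(2) each force a numerical relation among $\alpha$ and $\beta$ via the vertex equation~\eqref{sys:2}, and the two relations are incompatible once we feed in $\alpha=\delta$, $\beta=\gamma$.

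First I would analyze hypothesis~(1): a vertex $v$ incident only to three edges of length $a$. In the type~2 tile, the angle between the two $a$-edges at a single corner is the corner opposite the $b$-edge; under the trapezoid degeneracy the two legs meet the two parallel sides, so the angles occurring between two $a$-edges are exactly $\beta$ (at the corner $\alpha\beta\gamma$-side) and possibly others — more carefully, in a type~2 quadrangle with edge pattern $a,a,c,b$ (or the type-2 labelling of Figure~\ref{fig:type24_map}), I would identify precisely which inner angle(s) sit between two $a$-edges, then each of the three angles at $v$ must be of that type (or sum of that type), so \eqref{sys:2} at $v$ reads (a small positive-integer combination of) that angle $=2\pi$. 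Second I would analyze hypothesis~(2): a $3$-valent vertex $w$ with two $a$-edges and one $b$-edge. Reading the cyclic order at $w$, the two $a$-edges are consecutive or separated by the $b$-edge; the two angles flanking the $b$-edge are each of a type that lies between an $a$-edge and a $b$-edge, and the remaining angle lies between the two $a$-edges. In a type~2 tile the angle between $a$ and $b$ is one specific corner, so \eqref{sys:2} at $w$ gives another linear relation, something like (angle-between-$aa$) $+\,2\cdot$(angle-between-$ab$) $=2\pi$.

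Combining: from the two relations, together with $\alpha=\delta$ and $\beta=\gamma$ substituted into the area equation $\alpha+\beta+\gamma+\delta-2\pi=4\pi/F$, i.e. $2\alpha+2\beta-2\pi=4\pi/F$, I would solve for $\alpha$ and $\beta$ and check that the resulting value either violates one of the strict inequalities $0<\alpha,\beta,\gamma,\delta<2\pi$, or contradicts Lemma~\ref{lem:6.2} ($\alpha\ne\gamma$, which here would read $\alpha\ne\gamma$ and is automatically fine, so more likely the contradiction is with convexity/positivity or with the type-2 constraint that the $b$-edge is genuinely a distinct length), or forces an impossible value of $F$. The symmetric case $\beta=\gamma$ assumed first collapses to the same analysis by Lemma~\ref{lem:isoscelestrapezoid2}. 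The main obstacle I anticipate is the bookkeeping in the first two steps: correctly reading off, from the cyclic orders in the chart and the rigid edge-length pattern $a,a,b,c$ of a type~2 tile, exactly which inner angle type is forced between a given pair of edge-lengths at the vertices $v$ and $w$ — once those angle-type identifications are pinned down, the resulting linear system in $\alpha,\beta$ (with the area equation) is routine to solve and contradict.
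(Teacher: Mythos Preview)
Your skeleton is right---assume $\alpha=\delta$, hence $\beta=\gamma$ by Lemma~\ref{lem:isoscelestrapezoid2}, then read off two linear relations from the two special vertices---but the endgame you sketch misses the actual contradiction and the substitutes you propose will not work.

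First, a bookkeeping slip: the type-2 edge pattern is $a,a,a,b$, not $a,a,c,b$. Once this is fixed, the angle identifications are immediate: the corners between two $a$-edges are exactly $\beta$ and $\gamma$, and the corners adjacent to the $b$-edge are exactly $\alpha$ and $\delta$. Hypothesis~(1) then gives a $3$-valent vertex whose three angles all lie in $\{\beta,\gamma\}$, so under $\beta=\gamma$ equation~\eqref{sys:2} reads $3\beta=2\pi$, i.e.\ $\beta=\gamma=2\pi/3$. Hypothesis~(2) gives a vertex of type $X+X'+Y$ with $X,X'\in\{\alpha,\delta\}$ and $Y\in\{\beta,\gamma\}$, so $2\alpha+\beta=2\pi$, whence $\alpha=\delta=2\pi/3$ as well.

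Here is the gap. You write that the Lemma~\ref{lem:6.2} constraint $\alpha\ne\gamma$ ``is automatically fine,'' and you look instead for a contradiction via positivity, the type-2 edge constraint, or an impossible $F$ from the area equation. None of those fire: $\alpha=\beta=2\pi/3$ lies strictly in $(0,2\pi)$, and plugging into \eqref{eq:area} just yields $F=6$, which the hypotheses of this lemma do not exclude. The contradiction is precisely the one you dismissed: you have forced $\alpha=\gamma=2\pi/3$, and Lemma~\ref{lem:6.2} says this is impossible for a type-2 quadrangle. That is exactly how the paper closes the argument, in two lines, without ever invoking the area equation.
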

\begin{proof}Assume otherwise. By Lemma~\ref{lem:isoscelestrapezoid2},
 we have $\alpha=\delta$ and $\beta=\gamma$. So the
assumption~(1) implies $\beta=\gamma=2\pi/3$.  By the
assumption~(2), there exists a vertex of type $X+X'+Y$ $(X,
X'\in \{\alpha,\delta\}, \ Y\in \{\beta,\gamma\})$.  Thus
$\alpha=\beta=\gamma=\delta=2\pi/3$.  This contradicts against
Lemma~\ref{lem:6.2}.\end{proof}

\subsection{The proof of the assertions~\eqref{type2}, \eqref{invariance}, \eqref{thm:k:case1} and \eqref{fundamental} of Lemma~\ref{lem:combinatorial}}
\begin{lemma}\label{lem:u}
If a tiling by congruent quadrangles of type~2 realizes a chart satisfying
the assumptions of Theorem~\ref{thm:k}, then $\alpha\ne\delta$ and
$\beta\ne\gamma$.
\end{lemma}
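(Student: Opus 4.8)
The plan is to deduce Lemma~\ref{lem:u} by verifying that the hypotheses of Lemma~\ref{lem:aux} are met, i.e.\ that a chart satisfying the assumptions of Theorem~\ref{thm:k} must contain (1) a vertex incident only to three edges of length $a$ and (2) a 3-valent vertex incident to two edges of length $a$ and one edge of length $b$. Once these two combinatorial facts are in hand, Lemma~\ref{lem:aux} gives $\alpha\ne\delta$ and $\beta\ne\gamma$ immediately, so the whole content of the proof is producing these two vertices from Assumption~\eqref{assert:alternating}.

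First I would locate vertex~(2). By Assumption~\eqref{assert:alternating}, the edge $N v_{6i}$, the edge $v_{6i+1} S$, and the non-meridian edge $v_{6i+3} v_{6i+4}$ have length $b$, while every other edge has a length different from $b$. Look at the vertex $v_0$: its three incident edges are the northern edge $N v_0$, and the two non-meridian edges $v_0 v_1$ and $v_0 v_{-1}=v_0 v_{F-1}$. The first has length $b$; the other two are non-meridian edges not of the form $v_{6j+3}v_{6j+4}$ (since their indices are $\{0,1\}$ and $\{F-1,0\}$, and $F$ is a multiple of $6$ by assertion~\eqref{multiple6}), hence have length $\ne b$, and in fact both have the same length because $v_0$ is a $3$-valent vertex whose two non-$b$ edges must each match an $a$-or-$c$ edge of a tile; a short case check using the type~2 constraint of Figure~\ref{fig:type24_map} forces that common value to be $a$ rather than $c$ (the two $a$-edges of a type~2 tile are the ones adjacent to the $\beta$ angle, and it is the only way two non-$b$, equal-length edges can meet at a trivalent vertex without forcing a $c$ that then has nothing to match). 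So $v_0$ is a $3$-valent vertex with two $a$-edges and one $b$-edge, giving hypothesis~(2).

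Next I would produce vertex~(1), a vertex all of whose incident edges have length $a$. The natural candidate is a non-meridian vertex lying strictly between two $b$-labelled non-meridian edges, for instance $v_2$: its incident edges are $N v_2$, $v_1 v_2$ and $v_2 v_3$, and none of these is of the form $N v_{6i}$, $v_{6i+1}S$ or $v_{6i+3}v_{6i+4}$, so all three have length $\ne b$. It remains to rule out length $c$ on any of them. Here I would argue tile by tile: each of the four quadrangles surrounding the chain $v_1v_2v_3$ has the type~2 edge pattern $a,a,b,c$ (in the cyclic order of Figure~\ref{fig:type24_map}); since $v_2$ contributes no $b$-edge, the edges at $v_2$ inside each such tile are among the two $a$-edges, and propagating this around the three edges at $v_2$ together with the matching condition (an edge shared by two tiles has one fixed length) forces all three to be $a$. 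I expect this propagation step — carefully tracking, for each tile incident to $v_1,v_2,v_3$, which of its edges is the $b$-edge and which is the $c$-edge, using Assumption~\eqref{assert:alternating} to place the $b$-edges and the type~2 adjacency pattern to place the $c$-edge — to be the main obstacle, since one must make sure the $c$-edges are pushed away from $v_2$ consistently and cannot be forced back by the matching of a neighbouring tile. Once both vertices are exhibited, the lemma follows by one invocation of Lemma~\ref{lem:aux}.
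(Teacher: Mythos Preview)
Your overall plan is exactly the paper's: exhibit a vertex meeting condition~(1) and a vertex meeting condition~(2) of Lemma~\ref{lem:aux}, then invoke that lemma. The paper uses $v_2$ for~(1) and $v_1$ (your $v_0$ works equally well) for~(2).

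The gap in your write-up is a misreading of the type-2 edge pattern. A quadrangle of type~2 has edge-lengths $a,a,a,b$ --- three equal edges and a single distinguished one; the length $c$ does not occur in a type-2 tile at all (it enters only for type~4). So as soon as an edge is known not to have length $b$, it has length $a$ automatically. Your ``short case check'' at $v_0$ and the ``propagation step'' at $v_2$ that you flag as the main obstacle are therefore vacuous, and the handwaved parts of your argument (e.g.\ ``the two $a$-edges of a type-2 tile are the ones adjacent to the $\beta$ angle'') rest on an incorrect picture.

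With the correct pattern the paper's argument is immediate: the tile $N v_0 v_1 v_2$ is of type~2 and contains the $b$-edge $N v_0$, so its remaining three edges $v_0 v_1$, $v_1 v_2$, $v_2 N$ all have length $a$; the tile $S v_1 v_2 v_3$ is of type~2 and contains the $b$-edge $S v_1$, so in particular $v_2 v_3$ has length $a$. Hence $v_2$ is incident only to three $a$-edges, while $v_1$ (equivalently $v_0$) is $3$-valent with two $a$-edges and one $b$-edge, and Lemma~\ref{lem:aux} finishes.
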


\begin{proof}
Since the tile $N v_0 v_1 v_2$ of the pseudo-double wheel $\pdw{F}$ in
question is of type~2 and has an edge $N v_0$ of length $b$, other edges
$v_1 v_2$ and $v_2 N$ of the same tile have lengths $a$. The tile $S v_1
v_2 v_3$ is of type~2, and has an edge $S
v_1$ of length $b$ because of Assumption~\eqref{assert:alternating} of
Theorem~\ref{thm:k}. Thus the opposite edge $v_2 v_3$ of the same tile $S v_1
v_2 v_3$ has length $a$. Thus the vertex $v_2$ is incident only to three
edges of length $a$. On the other hand, the vertex $v_1$ is incident to
one edge of length $b$ and to two edges $v_0 v_1$ and $v_0 v_{F-1}$ of
length $a$.  By Lemma~\ref{lem:aux}, we have $\alpha\ne\delta$ and
$\beta\ne\gamma$.  \end{proof}

We can easily  verify the following Lemma for automorphisms $f,g$ defined in Definition~\ref{def:fg}.
\begin{lemma}\label{lem:sym}
If  $\A$ is  any chart satisfying the assumptions of
Theorem~\ref{thm:k}, then
\begin{enumerate}
\item  $f(\A)$ and $g(\A)$  are charts satisfying the assumptions of
       Theorem~\ref{thm:k}.

\item Whenever we can prove a property $P(\A)$,  we can prove the
 properties $P(f(\A))$ and $P(g(\A))$.
\end{enumerate} 
\end{lemma}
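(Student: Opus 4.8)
The plan is to use that $f$ and $g$ are automorphisms of the map $M=\pdw F$ (as already recorded in Definition~\ref{def:fg}), so that passing from $\A$ to $h(\A)$, $h\in\{f,g\}$, only relabels vertices and edges and leaves all the combinatorics intact. For the first assertion I would proceed in two steps. Step one: for \emph{any} map automorphism $h$, the triple $h(\A)=(M,\,L\circ h^{-1},\,K\circ h^{-1})$ is again a chart. Indeed, the conditions defining a chart are local — the balancing equations \eqref{sys:2} hold around each vertex, and the type~2 (resp.\ type~4) constraint of Figure~\ref{fig:type24_map} holds on each face — and $h$ sends vertices to vertices and faces to faces while respecting incidences and cyclic orders; hence each such condition for $h(\A)$ is exactly the corresponding condition for $\A$ read off along $h$. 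This already yields Assumption~\eqref{assumption:2} of Theorem~\ref{thm:k} for $h(\A)$, since the map is literally unchanged.

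Step two: Assumption~\eqref{assert:alternating} is preserved by $f$ and by $g$. Let $B=\{\,Nv_{6i},\ v_{6i+1}S,\ v_{6i+3}v_{6i+4}\ :\ 0\le i<F/6\,\}$ be the set of edges prescribed to have length $b$; since $f$ and $g$ are bijections of the edge set, it is enough to show $f(B)=B=g(B)$. One first notes that $F$ must be a multiple of $6$: otherwise $F\equiv2$ or $4\pmod 6$, and following $B$ around the equatorial cycle one finds a single face carrying two edges of $B$, impossible for a tile of type~2 or type~4 (whose edge lengths form the multiset $\{a,a,b,c\}$). With $F=6m$, the index pattern defining $B$ is $6$-periodic, so $g(v_i)=v_{i+6}$ gives $g(B)=B$ at once; and for $f$ one computes $f(Nv_{6i})=Sv_{1-6i}=v_{6i'+1}S$, $f(v_{6i+1}S)=Nv_{-6i}=Nv_{6i''}$, and $f(v_{6i+3}v_{6i+4})=v_{-6i-3}v_{-6i-2}=v_{6j+3}v_{6j+4}$ for the residues $i'\equiv-i$, $i''\equiv-i$, $j\equiv-i-1\pmod m$, all lying in $\{0,\dots,F/6-1\}$; hence $f(B)=B$. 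This proves the first assertion.

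The second assertion is then the soft consequence of the first. If $P$ is a property of charts whose proof for $\A$ uses nothing about $\A$ beyond ``$\A$ satisfies the assumptions of Theorem~\ref{thm:k}'' — together with the general Lemmas~\ref{lem:6.2}--\ref{lem:aux}, which are statements purely about type~2 and type~4 tiles — then the very same argument, applied to the chart $f(\A)$ (resp.\ $g(\A)$), which by the first assertion again satisfies those assumptions, proves $P(f(\A))$ (resp.\ $P(g(\A))$). In short, once part~(1) is in place, part~(2) is a tautology about the scope of a proof and requires no further work.

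I expect the only mildly delicate point to be the index bookkeeping modulo $F$ behind $f(B)=B$ and $g(B)=B$, including the small preliminary observation that $F$ is divisible by $6$ so that the residues land back in the prescribed ranges; everything else is immediate. When $\A$ is realized — the only situation in which Lemma~\ref{lem:sym} is later invoked — this divisibility may instead simply be quoted from assertion~\eqref{multiple6} of Lemma~\ref{lem:combinatorial}.
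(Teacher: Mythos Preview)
Your proposal is correct and considerably more detailed than the paper's own treatment: the paper gives no proof at all beyond the sentence ``We can easily verify the following Lemma for automorphisms $f,g$ defined in Definition~\ref{def:fg}.'' Your two-step argument --- first that any map automorphism sends charts to charts because the defining conditions are local, then the explicit check that $f(B)=B=g(B)$ for the set $B$ of prescribed $b$-edges --- is exactly the routine verification the paper is gesturing at, and your index arithmetic is right. The preliminary observation that $F$ is divisible by~$6$ (needed for the residues to land where you want them) is handled in the paper as assertion~\eqref{multiple6} of Lemma~\ref{lem:combinatorial}, proved by a slightly different one-line argument from the one you sketch, but either works and you already flag this overlap. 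Part~(2) is, as you say, immediate from part~(1).
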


\begin{definition}[conjugate]
Fix a chart.
For any property $P(\alpha,\beta,\gamma,\delta)$ for the inner angles $\alpha,
\beta,\gamma,\delta$ of the tile, the \emph{conjugate property}
$P^*(\alpha,\beta,\gamma,\delta)$ is, by definition, a property
$P(\delta,\gamma,\beta,\alpha)$. 
\end{definition}

\begin{lemma}\label{lem:lm}\label{lem:6}
Suppose a tiling by congruent quadrangles realizes a chart satisfying
 the assumptions of Theorem~\ref{thm:k}. Then the tile is of type~2, and
 the chart satisfies the following properties \eqref{prop3star} and
 \eqref{prop4} or satisfies the conjugate properties
 $\eqref{prop3star}^*$ and $\eqref{prop4}^*$.
\begin{enumerate}[(A)]
\item \label{prop3star} For each integer $0\le i<F/6$, the types
of $\angle S v_{6i+1} v_{6i+2}$ and $\angle N v_{6i} v_{6i-1}$ are
$\delta$, those of $\angle v_{6i} v_{6i+1} S$ and $\angle v_{6i+1} v_{6i} N$
are $\alpha$, and those of $\angle v_{6i+2} v_{6i+1} v_{6i}$ and $\angle
      v_{6i-1} v_{6i}
v_{6i+1}$ are $\beta$.

\item \label{prop4} For each  integer $0\le i<F/6$, the types of
      $\angle v_{6i+2} N v_{6i+4}$ and $\angle v_{6i+5} S v_{6i+3}$ are all $\gamma$.
\end{enumerate}
\end{lemma}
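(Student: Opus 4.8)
The plan is to combine the local constraints imposed at the vertices $v_0,v_1,v_2$ (forced by Assumption~\eqref{assert:alternating} via Lemma~\ref{lem:u}) with the symmetry afforded by the automorphisms $f$ and $g$ from Definition~\ref{def:fg}, and then propagate the resulting angle types around the whole pseudo-double wheel. First I would establish that the tile is of type~2: by Lemma~\ref{lem:combinatorial}\eqref{type2} (whose proof is essentially the observation already made in the proof of Lemma~\ref{lem:u}, namely that the tile $N v_0 v_1 v_2$ has an edge $Nv_0$ of length $b$ and the opposite edge $v_1 v_2$ forced to length $a$, while $v_2 v_3$ is forced to length $a$ from tile $Sv_1v_2v_3$), the tile cannot be of type~4 because a type~4 tile has no pair of opposite edges both of length $a$; alternatively cite Lemma~\ref{lem:6.3} together with the vertex types that arise below. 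Having fixed type~2, recall from Figure~\ref{fig:type24_map} that in a type~2 tile the two edges flanking $\alpha$ have lengths $b$ and $a$, the two flanking $\beta$ both have length $a$, the two flanking $\gamma$ have lengths $a$ and $b$, and the two flanking $\delta$ both have length $b$; equivalently $\beta$ is the unique angle between two $a$-edges and $\delta$ the unique angle between two $b$-edges.

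Next I would read off the angle types around $v_2$. Since $v_2$ is incident only to three $a$-edges ($v_1v_2$, $v_2N$, $v_2v_3$), each of the three angles there is flanked by two $a$-edges, hence each must be of type $\beta$; in particular $\angle v_1 v_2 N$, $\angle N v_2 v_3$ and $\angle v_3 v_2 v_1$ are all $\beta$ — but wait, this overcounts, so I would instead argue that each angle at $v_2$ is $\beta$, giving vertex type $3\beta=2\pi$, whence $\beta=2\pi/3$; this is consistent with Lemma~\ref{lem:combinatorial}\eqref{thm:k:case1} only when $F=6$, so the correct reading is subtler. The actual argument: the tile $Nv_0v_1v_2$ is type~2 with $Nv_0$ of length $b$; the angle $\angle v_0 N v_2$ of this tile is between the $b$-edge $Nv_0$ and the $a$-edge $Nv_2$, hence is of type $\alpha$ or $\gamma$ — and since $\gamma>\pi$ by \eqref{thm:k:case1} it is the \emph{other} angle at an $a,b$-pair; a case split on whether $\angle v_0 N v_2$ is $\alpha$ or $\gamma$ (this split is exactly the dichotomy between \eqref{prop3star},\eqref{prop4} and their conjugates, since conjugation swaps $\alpha\leftrightarrow\delta$ and $\beta\leftrightarrow\gamma$) fixes, around the tile $Nv_0v_1v_2$, the types of all four of its angles, and similarly for $Sv_1v_2v_3$ using the edge $Sv_1$ of length $b$ and Lemma~\ref{lem:u} ($\alpha\ne\delta$, $\beta\ne\gamma$) to rule out the degenerate matchings. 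This determines the types of $\angle N v_0 v_1$, $\angle v_0 v_1 S$, $\angle v_1 v_2 N$, $\angle N v_2 v_3$, $\angle v_2 v_3 S$ and their neighbours in the $i=0$ block, yielding precisely claim \eqref{prop3star} for $i=0$.

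Then I would apply the automorphism $g$, which by Lemma~\ref{lem:combinatorial}\eqref{invariance} fixes the chart ($\A=g(\A)$) and shifts $v_i\mapsto v_{i+6}$: this transports the $i=0$ conclusions to every $i<F/6$, giving \eqref{prop3star} in full. For \eqref{prop4}, note that at the vertex $N$ the angle $\angle v_{6i+2} N v_{6i+4}$ is flanked by the two $a$-edges $Nv_{6i+2}$ and $Nv_{6i+4}$ (since by \eqref{assert:alternating} only $Nv_{6j}$ have length $b$), hence — in a type~2 tile the only angle between two $a$-edges is $\beta$, but this angle lies in tile $Nv_{6i+2}v_{6i+3}v_{6i+4}$ whose edge $v_{6i+3}v_{6i+4}$ has length $b$, so the tile-angle at $N$ there is between $a$ and $a$, forcing type $\beta$? — here I must be careful: in the tile $Nv_{6i+2}v_{6i+3}v_{6i+4}$ the four edges in cyclic order are $Nv_{6i+2}$ (length $a$), $v_{6i+2}v_{6i+3}$ (length $a$), $v_{6i+3}v_{6i+4}$ (length $b$), $v_{6i+4}N$ (length $a$); matching this against the type~2 pattern $b,a,a,b$... $a,a,b,a$ up to rotation and reflection forces the position of the $\delta$-vertex, and the angle at $N$ (between the two edges of lengths $a$ and $a$ that are \emph{not} both adjacent to the $b$-edge) comes out as $\gamma$ given $\gamma>\pi$; the same reasoning at $S$ with tile $Sv_{6i+3}v_{6i+4}v_{6i+5}$ gives $\angle v_{6i+5}Sv_{6i+3}$ of type $\gamma$. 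Applying $f$ interchanges the northern and southern halves and confirms the symmetry between the two statements in \eqref{prop4}.

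The main obstacle I anticipate is the careful bookkeeping in the case split of the third paragraph: one must verify that the two branches really are exchanged by conjugation (so that exactly the two stated alternatives survive, with no third possibility), and that within each branch the forced matching of a type~2 quadrilateral's $b,a,a,b$ edge-pattern against the pseudo-double-wheel's edge-lengths is unique once $\gamma>\pi$ and $\alpha\ne\delta$, $\beta\ne\gamma$ are invoked — the sign condition $\gamma>\pi$ from \eqref{thm:k:case1} is what breaks the $\alpha\leftrightarrow$(other angle at an $a,b$-corner) ambiguity and is therefore essential. Once that local rigidity is nailed down, the global statement follows purely formally from $\A=g(\A)=f(\A)$.
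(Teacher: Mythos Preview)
Your proposal has a fundamental circularity. You repeatedly invoke Lemma~\ref{lem:combinatorial}\eqref{type2}, \eqref{invariance} (the identity $\A=f(\A)=g(\A)$), and \eqref{thm:k:case1} (in particular $\gamma>\pi$), but in the paper those assertions are proved \emph{from} Lemma~\ref{lem:lm}: immediately after the present lemma the text reads ``By Lemma~\ref{lem:lm}, the assertion~\eqref{type2} of Lemma~\ref{lem:combinatorial} follows,'' and the proofs of \eqref{invariance} and \eqref{thm:k:case1} come after that. So you cannot use the chart invariance under $g$ to transport the $i=0$ block to all $i$, and you cannot use $\gamma>\pi$ to break the $\alpha/\gamma$ ambiguity at an $a,b$-corner. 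What \emph{is} available before this lemma is Lemma~\ref{lem:sym}, which only says that $f(\A)$ and $g(\A)$ again satisfy the hypotheses of Theorem~\ref{thm:k}; that lets you transport \emph{arguments}, not \emph{angle-assignments}.

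There is also a structural error about type~2: a type~2 tile has edge pattern $a,a,a,b$ (one $b$-edge, three $a$-edges), so $\alpha$ and $\delta$ are each between an $a$-edge and the $b$-edge, while $\beta$ and $\gamma$ are each between two $a$-edges. Your description of $\delta$ as ``the unique angle between two $b$-edges'' and your matching against a ``type~2 pattern $b,a,a,b$'' are both wrong, and this derails the local determination of angle types at $v_2$ and at $N$ in the tile $Nv_{6i+2}v_{6i+3}v_{6i+4}$ (that tile has edge lengths $a,a,b,a$, so the angle at $N$ is between two $a$-edges and could a priori be either $\beta$ or $\gamma$; you need an argument, not an edge-length check, to force $\gamma$).

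The paper's proof proceeds quite differently: it isolates four forbidden local configurations at the vertices $v_{6i}$ (called Property~1, Property~2 and their conjugates), rules each out by a short chain of vertex-type equations together with Lemmas~\ref{lem:6.2}, \ref{lem:6.3}, \ref{lem:u} and the area relation~\eqref{eq:area}, and only then deduces that every $v_{6i}$ (hence every $v_{6i+1}$, via $f$ and Lemma~\ref{lem:sym}) has vertex type $\alpha+\beta+\delta$ or $\alpha+\gamma+\delta$; a final case split on whether all of them are of the first or second kind yields the two alternatives of the lemma, and the mixed case gives $\beta=\gamma$, contradicting Lemma~\ref{lem:u}. The type~2 conclusion falls out of that case analysis rather than being assumed at the start.
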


\noindent
\begin{proof} It is sufficient to prove the
 following properties and the conjugate properties
Property~$1^*$ and Property~$2^*$:

\medskip 
\emph{Property~1}. It is not the case that: 
for some nonnegative integer $i<F/6$,
both of  $\angle v_{6i+1} v_{6i} N$
      and $\angle N v_{6i} v_{6i-1}$ have type $\delta$ and $\angle v_{6i-1} v_{6i} v_{6i+1}$ has type $\beta$.

\medskip
\emph{Property~2}. It is not the case that: 
for some nonnegative integer $i<F/6$,
both of  $\angle v_{6i+1} v_{6i} N$
      and $\angle N v_{6i} v_{6i-1}$ have type $\alpha$ and $\angle v_{6i-1} v_{6i} v_{6i+1}$ has type $\beta$.

\medskip We verify the sufficiency.  Assume we can prove Property~1,
Property~2, Property~1$^*$ and Property~2$^*$. Then for every
nonnegative integer $i< F/6$, the type of the vertex $v_{6i}$ of
$\pdw{F}$ is $\alpha+\beta+\delta$ or $\alpha+\gamma+\delta$, and thus
the type of the vertex $v_{6i+1}$ of $\pdw{F}$ is so, because the
automorphism $f$ satisfies $f(v_{6i})=v_{6i+1}$ and Lemma~\ref{lem:sym}.

Because every edge $v_{6i+3} v_{6i+4}$ has length $b$, each angle
 mentioned in \eqref{prop4} is $\beta$ or $\gamma$. 

First consider the case all the vertices $v_{6i}$ and $v_{6i+1}$ have type
$\alpha+\gamma+\delta$. Then all the angles $\angle v_{6i+2} N v_{6i+4}$
and $\angle v_{6i+5} S v_{6i+3}$ do $\beta$. Otherwise,
$\alpha+\gamma+\delta<2\pi$ because $\alpha$ and $\delta$ appear as types
of angles around the poles and $F/2\ge 5$. Hence we have $\eqref{prop4}^*$.

Each $\angle v_{6i+1} v_{6i} N$ has type $\delta$. Otherwise
$\angle v_{6i+1} v_{6i} N$ has type $\alpha$ and thus $\angle v_{6i+2}
v_{6i+1} v_{6i}$ does $\beta$, which contradicts against the vertex
type of $v_{6i+1}$ being $\alpha+\gamma+\delta$. Similarly we can prove
$\angle v_{6i} v_{6i+1} S$ has type $\delta$. So we have
$\eqref{prop3star}^*$.

Thus each $\angle v_{6i+4} N v_{6i+6}$ has type $\delta$. If the tile is
of type~4, then we have contradiction against $\eqref{prop4}^*$, by
considering the length of the edge $v_4 N$. So the tile is of type 2.

Next consider the case all the vertices $v_{6i}$ and $v_{6i+1}$ have type
$\alpha+\beta+\delta$. But we can prove \eqref{prop3star} and
\eqref{prop4} by the same argument as above but with
$(\alpha,\beta)\leftrightarrow (\delta,\gamma)$ swapped. 

In the remaining case, we have a vertex of type
$\alpha+\beta+\delta=2\pi$ and that of type
$\alpha+\gamma+\delta=2\pi$. The former vertex leads to a contradiction
against Lemma~\ref{lem:6.3}, when the tile is of type~4. If the tile is
 of type~2, this
case implies $\beta=\gamma$, which contradicts against
Lemma~\ref{lem:u}.

Hence to verify Lemma~\ref{lem:lm},
below we prove Property~1, Property~2 and their conjugate properties.

Property~$1^*$ holds when the tile is of type~4, by the following
argument. If both of $\angle v_{6i+1} v_{6i} N$ and $\angle N v_{6i} v_{6i-1}$ have
type $\alpha$, then the length of the edge $v_{6i} v_{6i+1}$ and that of edge
$v_{6i} v_{6i-1}$ are both $a$. So $\angle v_{6i-1} v_{6i} v_{6i+1}$ has type
$\beta$.

Property~$1$ and Property~$2^*$ hold when the tile is of type~4, 
by Lemma~\ref{lem:6.3}.

Property~$1$ holds when the tile is of type~2.  Otherwise,
$\beta+2\delta=2\pi$. See Figure~\ref{fig:4}~(left). The type of $\angle
v_{6i-1} v_{6i-2} N$ is $\beta$. By
Assumption~\eqref{assert:alternating} of Theorem~\ref{thm:k}, the edge
$v_{6i-2} v_{6i-3}$ has length $b$, the type of the vertex $v_{6i-2}$ is
one of $\alpha+\beta+\delta$, $\beta+2\alpha$, and $\beta+2\delta$. For
the first two cases, we have $\alpha=\delta$ which contradicts against
Lemma~\ref{lem:u}. Therefore in the vertex type of the vertex $N$, the
type $\gamma$ occurs.  However the vertex type of $v_{6i-3}$ is
$2\alpha+Y$ for some $Y\in\{\beta,\gamma\}$. In case $Y=\beta$, we have
$\alpha=\delta$, which  contradicts against Lemma~\ref{lem:u}. In the other
case $Y=\gamma$, the vertex type of $v_{6i-3}$ is a proper subtype of
$N$, which is a contradiction.

We can prove Property~$1^*$ holds when the tile is of type~2, by swapping $(\alpha,\beta)\leftrightarrow (\delta,\gamma)$ in the proof above.

\begin{figure}[ht]
\begin{center}
\includegraphics[width=6cm]{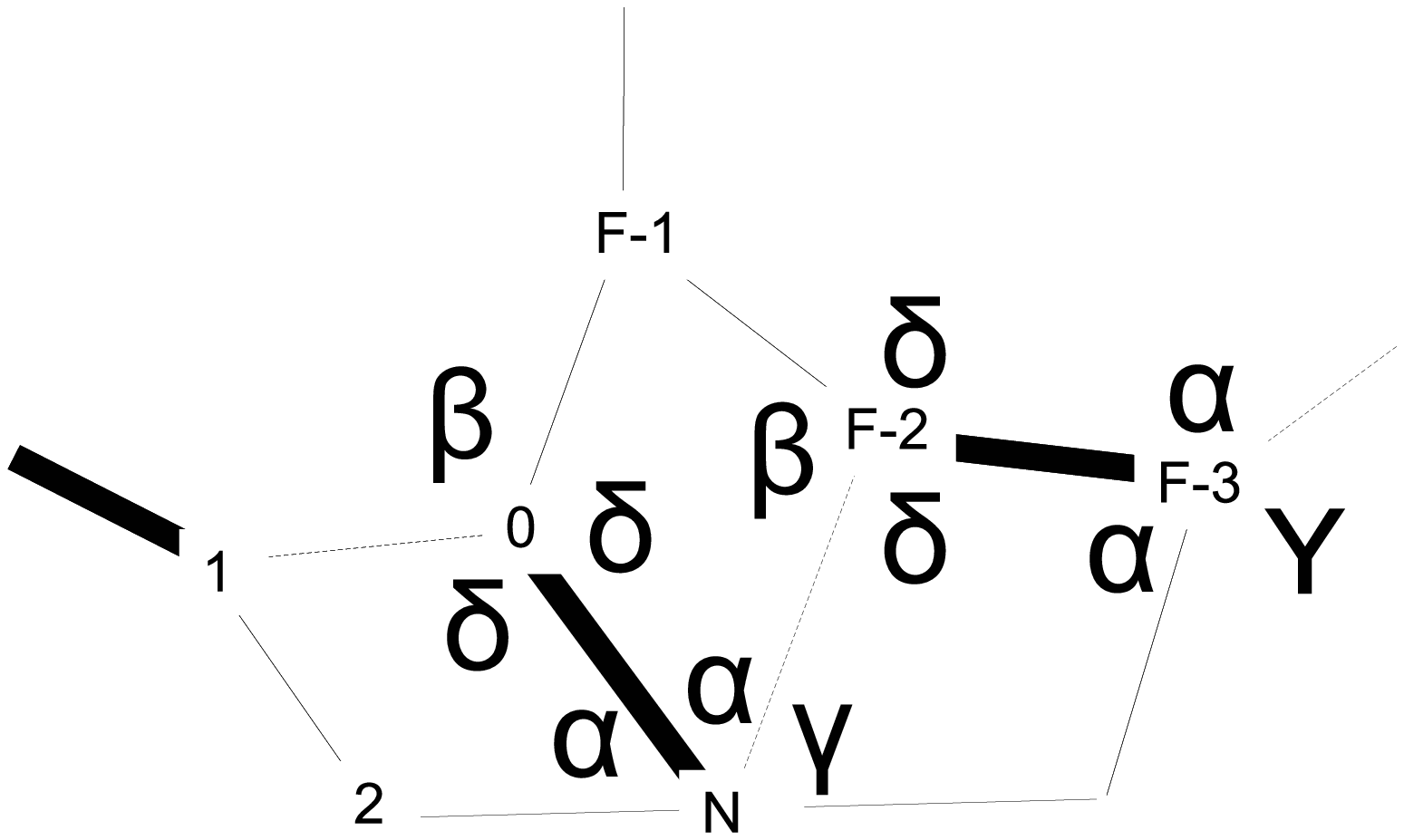}
\includegraphics[width=6cm]{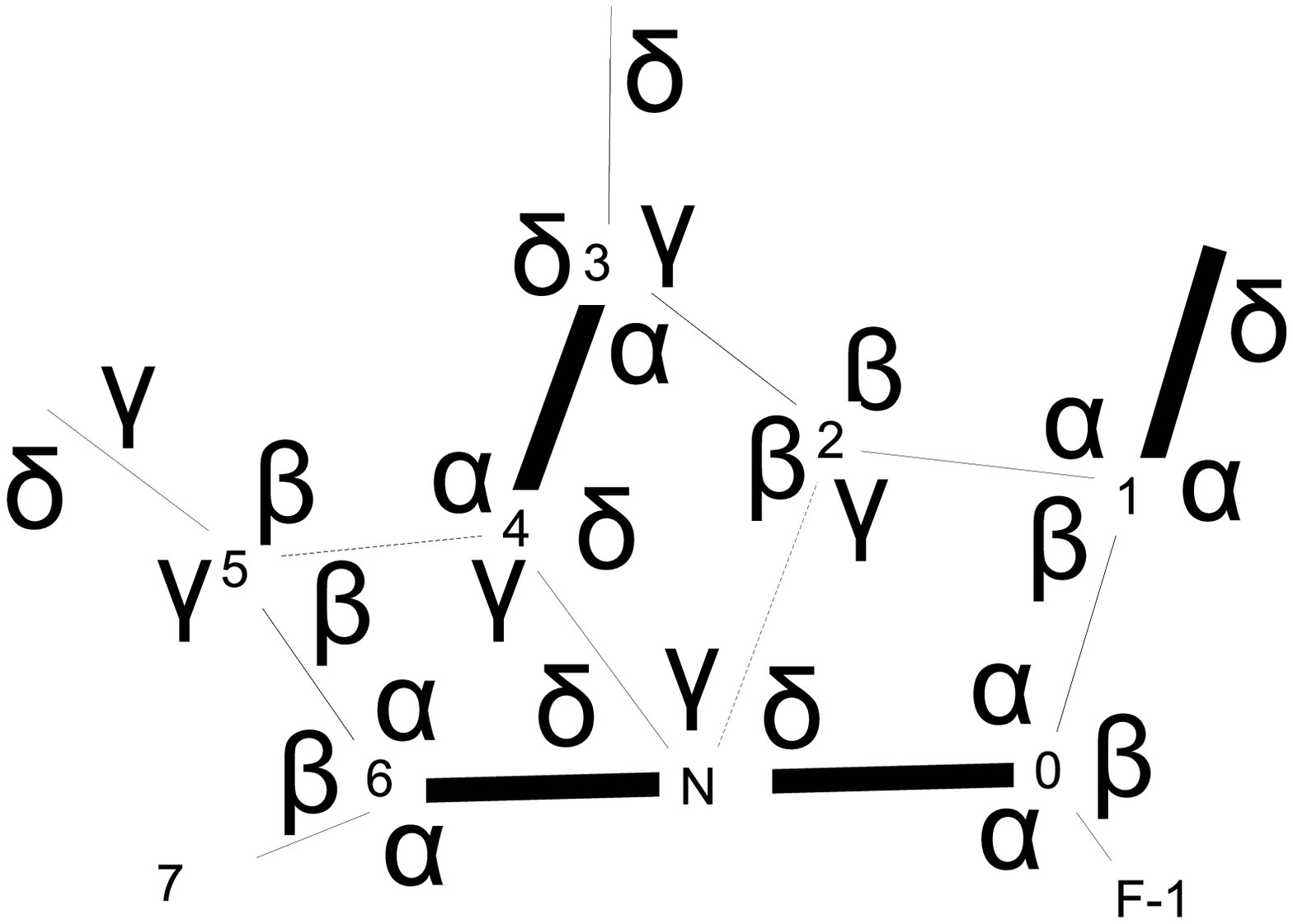}
\end{center}
\caption{The proof that Property~1 holds when the tile is of
type~2~(left), and the proof of
Claim~\ref{claim:property2}~(right). The thick edges have length
$b$.\label{fig:4}}\end{figure}

\begin{claim}\label{claim:property2} Property~2 holds when the tile is
 of type~2 or of type~4.\end{claim}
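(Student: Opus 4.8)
The plan is to assume Property~2 fails and derive a contradiction, handling the two tile types separately. So suppose that for some integer $i$ with $0\le i<F/6$ the angles $\angle v_{6i+1}v_{6i}N$ and $\angle N v_{6i}v_{6i-1}$ have type $\alpha$ while $\angle v_{6i-1}v_{6i}v_{6i+1}$ has type $\beta$. In $\pdw{F}$ the vertex $v_{6i}$ is $3$-valent, its neighbours being $N$, $v_{6i-1}$, $v_{6i+1}$, so these are exactly the angles around $v_{6i}$ and therefore $2\alpha+\beta=2\pi$. Moreover, by Assumption~\eqref{assert:alternating} of Theorem~\ref{thm:k}, the northern edge $N v_{6i}$ has length $b$.

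First I would pin down the lengths of the two non-meridian edges at $v_{6i}$. By Figure~\ref{fig:type24_map}, in a tile of type~2 or of type~4 an angle of type $\alpha$ is incident to the $a$-edge and to exactly one of the $b$-edge and the $c$-edge. If it is the $c$-edge, no angle of type $\alpha$ is ever incident to a $b$-edge, so $\angle v_{6i+1}v_{6i}N=\alpha$ is impossible at $v_{6i}$ and the claim holds vacuously; hence I may assume $\alpha$ is the $\{a,b\}$-corner of the tile. Then, since $\angle v_{6i+1}v_{6i}N=\alpha$ and $|N v_{6i}|=b$, the other edge $v_{6i}v_{6i+1}$ has length $a$; likewise $v_{6i-1}v_{6i}$ has length $a$. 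Consequently the tile $S v_{6i-1}v_{6i}v_{6i+1}$ of $\pdw{F}$ has the two \emph{adjacent} edges $v_{6i-1}v_{6i}$ and $v_{6i}v_{6i+1}$ of length $a$.

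If the tile is of type~2 this already contradicts the fact that a type~2 quadrangle has its two $a$-edges \emph{opposite}, so that no two $a$-edges of a tile share a vertex; this settles the type~2 case. If the tile is of type~4, I would propagate lengths through the two faces $S v_{6i-1}v_{6i}v_{6i+1}$ and $N v_{6i-2}v_{6i-1}v_{6i}$. In the first, the adjacent pair $v_{6i-1}v_{6i},v_{6i}v_{6i+1}$ is the pair of $a$-edges, so the remaining edges $S v_{6i-1}$ and $v_{6i+1}S$ carry the lengths $b$ and $c$; since $|v_{6i+1}S|=b$ by Assumption~\eqref{assert:alternating}, we get $|S v_{6i-1}|=c$. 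In the second face, the edges $v_{6i-1}v_{6i}$ (length $a$) and $v_{6i}N$ (length $b$) are adjacent; in a type~4 quadrangle the $b$-edge has the $c$-edge on one side and an $a$-edge on the other, and the two $a$-edges are adjacent, so $|v_{6i-2}v_{6i-1}|=a$. Thus the $3$-valent vertex $v_{6i-1}$, with neighbours $v_{6i-2},v_{6i},S$, is incident to edges of lengths $a,a,c$. By Figure~\ref{fig:type24_map}, since $\alpha$ is the $\{a,b\}$-corner, the edge incidences of $\alpha,\beta,\gamma,\delta$ in a type~4 tile are $\{a,b\},\{a,a\},\{a,c\},\{b,c\}$, so the only angle between two $a$-edges is $\beta$ and the only angle between an $a$-edge and a $c$-edge is $\gamma$; hence the three angles around $v_{6i-1}$ are $\gamma,\beta,\gamma$, giving $2\gamma+\beta=2\pi$. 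Combined with $2\alpha+\beta=2\pi$ this forces $\alpha=\gamma$, contradicting Lemma~\ref{lem:6.2}.

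I expect the main obstacle to be the type~4 case: one must chase the lengths $a,a,b,c$ correctly through the three tiles meeting at $v_{6i}$ and the three meeting at $v_{6i-1}$, and be sure that the two hypotheses on $\angle v_{6i+1}v_{6i}N$ and $\angle N v_{6i}v_{6i-1}$, together with $|N v_{6i}|=b$, really force both non-meridian edges at $v_{6i}$ to have length $a$. The type~2 case, by contrast, drops out immediately from the opposite-edge structure of a type~2 tile.
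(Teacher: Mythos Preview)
Your type~4 argument is correct and in fact cleaner than the paper's: by pinning down the edge-lengths around $v_{6i}$ and then around $v_{6i-1}$, you obtain $2\gamma+\beta=2\pi$ directly and finish via Lemma~\ref{lem:6.2}. The paper instead propagates angle-types forward through $v_{6i+1},v_{6i+2},\ldots,v_{6i+6}$, derives the system $2\alpha+\beta=2\pi$, $2\beta+\gamma=2\pi$, $\alpha+\gamma+\delta=2\pi$, $(\gamma+2\delta)\cdot F/6=2\pi$, and reaches $\delta=(10-F)\pi/F\le 0$; your local length-chasing shortcut is a genuine simplification in the type~4 case.

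The type~2 case, however, is wrong, and the error is exactly where you thought the argument was easiest. A quadrangle of type~2 does \emph{not} have ``its two $a$-edges opposite''; it has edge pattern $a,a,a,b$ cyclically, with the single $b$-edge lying between the $\alpha$- and $\delta$-corners (so $\alpha\beta=\beta\gamma=\gamma\delta=a$ and $\delta\alpha=b$). This is visible in Figure~\ref{fig:type24_map}, in the proof of Lemma~\ref{lem:u} (where the tile $N v_0 v_1 v_2$ has one $b$-edge and three $a$-edges), and in the final chart of Figure~\ref{chart:a}. Consequently, two adjacent $a$-edges in the southern tile $S\,v_{6i-1}\,v_{6i}\,v_{6i+1}$ are perfectly compatible with type~2: the angle between them is simply $\beta$ or $\gamma$, and here it is $\beta$ by hypothesis. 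No contradiction arises, and your argument collapses at this point.

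Unfortunately your type~4 trick does not port to type~2: since a type~2 tile has no $c$-edge, the edge-length bookkeeping at $v_{6i-1}$ only tells you its three incident edges are $a,a,a$, which does not determine the vertex type. You will need a different idea for type~2 --- for instance, tracing the angle-types forward as the paper does, using Lemma~\ref{lem:u} ($\alpha\ne\delta$, $\beta\ne\gamma$) repeatedly to resolve ambiguities, until enough linear relations among $\alpha,\beta,\gamma,\delta$ accumulate to contradict $F\ge 10$.
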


\begin{proof}Assume otherwise. See Figure~\ref{fig:4}~(right). Since the edge $N v_{6i}$ has length $b$,
 the type of the angle $\angle v_{6i+2} v_{6i+1} v_{6i}$ is $\beta$. Since the edge
 $S v_{6i+1}$ has length $b$, the type of $\angle v_{6i} v_{6i+1} S$ is
 $\alpha$. Assume the type of $\angle S v_{6i+1} v_{6i+2}$ is $\delta$.  When the
 tile is of type~4, we have a contradiction against Lemma~\ref{lem:6.3}.
 When the tile is of type~2, we have a contradiction against
 Lemma~\ref{lem:u} because $\alpha=\delta$ follows by comparing the
 vertex types of $v_{6i}$ and $v_{6i+1}$. Therefore the vertex types of $v_{6i}$
 and $v_{6i+1}$ are both
\begin{align}
2\alpha+\beta=2\pi\label{eqv01},
\end{align}
and thus the type of $\angle v_{6i+1} v_{6i+2} v_{6i+3}$ is $\beta$. If the vertex type
 of $v_{6i+2}$ is $\beta+2\gamma$, then $\alpha=\gamma$ by comparing the
 vertex types of $v_{6i+1}$ and $v_{6i+2}$. This contradicts against
 Lemma~\ref{lem:6.2}, whether the tile is of type~2 or of type~4. Thus
 the vertex type of $v_{6i+2}$ is
\begin{align}
 2\beta+\gamma=2\pi\label{eqv2}.
\end{align}
Because the type of $\angle v_{6i+2} v_{6i+3} S$ is $\gamma$, if the vertex type
 of $v_{6i+3}$ is $2\alpha+\gamma$, then \eqref{eqv01} and \eqref{eqv2} implies
 $\alpha=\beta=\gamma=2\pi/3$, which contradicts against
 Lemma~\ref{lem:6.2}. Therefore  the vertex type of $v_{6i+3}$ is
\begin{align}
 \alpha+\gamma+\delta=2\pi.\label{eqv3}
\end{align} 
Then the vertex type of $v_{6i+4}$ is \eqref{eqv3}, too.  Otherwise
$\alpha+\beta+\delta=2\pi$. This contradicts against Lemma~\ref{lem:u}
and the equation~\eqref{eqv3}.  Hence the type of $\angle N v_{6i+6}
v_{6i+5}$ is $\alpha$. The type of $\angle S v_{6i+5} v_{6i+6}$ is
$\gamma$. Otherwise it is $\beta$. Then \eqref{eqv2} implies
$\beta=\gamma$, which contradicts against Lemma~\ref{lem:u}.  Thus the
type of $\angle v_{6i+5} v_{6i+6} v_{6i+7}$ is $\beta$.

Actually, the type of $\angle v_{6i+7} v_{6i+6} N$ is
$\alpha$. Otherwise it is $\delta$, and the vertex type of $v_{6i+6}$ is
$\alpha+\beta+\delta$. This contradicts against Lemma~\ref{lem:u} and
\eqref{eqv3}.

To sum up, from the negation of Property~2 we derived the negation of
Property~2 with the indices increased by 6.  This increase of the
indices by 6 contributes $\gamma+2\delta$ to the vertex type of the
vertex $N$, and three tiles to the northern part of the
tiling. Therefore $\gamma+2\delta={2\pi}/(F/6)={12\pi}/{F}$. By solving
the system of this equation, \eqref{eqv2}, \eqref{eqv3}, and
\eqref{eq:area}, we have $0<\delta=(10-F)\pi/F$, which is, however, be
negative from $F\ge10$. This ends the proof of
Claim~\ref{claim:property2}.\end{proof}

We can prove Property~$2^*$ holds when the tile is of type~2, by the
 same argument as above with
 $(\alpha,\beta)\leftrightarrow(\delta,\gamma)$ swapped.  We can prove
 Property~$2^*$ holds when the tile is of type~4 by Lemma~\ref{lem:6.3}.
This ends the proof of Lemma~\ref{lem:lm}.
\end{proof}

Below is the continuation of the proof of
Lemma~\ref{lem:combinatorial}. The readers are kindly advised to follow
the argument with Figure~\ref{chart:a}.

\begin{proof}
By Lemma~\ref{lem:lm}, the assertion~\eqref{type2} of
 Lemma~\ref{lem:combinatorial} follows.  Hence $L(e)=L(f(e))=L(g(e))$
 for every edge $e$ of the chart.  

First consider the case the properties
 $\eqref{prop3star}^*$ and $\eqref{prop4}^*$ of Lemma~\ref{lem:6} hold.
By the property~$\eqref{prop3star}^*$, for each nonnegative integer $i<F/6$,
\begin{align}
\mbox{the type of $\angle v_{6i} N v_{6i+2}$ is $\alpha$} \label{0N2}
\end{align} 
and the type of $\angle v_{6i+2} v_{6i+1} v_{6i}$ is $\gamma$. By
Lemma~\ref{lem:6}, 
\begin{align}
\mbox{the type of $\angle v_{6i-1} v_{6i} v_{6i+1}$ is
 $\gamma$.} \label{-101}
\end{align}
By the property~$\eqref{prop4}^*$,
\begin{align}
\mbox{the type of $\angle v_{6i+2} N v_{6i+4}$ is $\beta$.} \label{2N4}
\end{align}
By \eqref{0N2},
\begin{align}
\mbox{the type of $\angle N v_{6i+2} v_{6i+1}$ is $\beta$.} \label{N21}
\end{align}
By \eqref{-101}, the type of $\angle v_{6i} v_{6i+1} S$ is
 $\delta$. By Lemma~\ref{lem:lm}, the type of 
 $\angle S v_{6i+1} v_{6i+2}$ is $\alpha$. Thus $K(\psi)=K(f(\psi))$ if $\psi$ is
 an angle around $v_{6i}$ or $v_{6i+1}$. Observe
\begin{align}
\mbox{the type of $\angle v_{6i+1} v_{6i+2} v_{6i+3}$ is $\beta$}, \label{123}
\end{align}
and thus
\begin{align}
\mbox{the type of $\angle v_{6i+2} v_{6i+3} S$ is $\gamma$}. \label{23S}
\end{align}
By \eqref{2N4}, 
\begin{align}\mbox{the type of $\angle v_{6i+3} v_{6i+2} N$ is $\gamma$.}\label{32N}
\end{align}
Thus the vertex type of $v_{6i+2}$ is \eqref{eqv2}.
Note the type of $\angle N v_{6i+4} v_{6i+3}$ is $\alpha$,
and 
\begin{align}
\mbox{the type of $\angle v_{6i+4} v_{6i+3} v_{6i+2}$ is $\delta$.}
 \label{432}
\end{align}
By the property~$\eqref{prop3star}^*$, we have the equation \eqref{eqv3}.
By this, \eqref{23S}, and \eqref{432}, if the type of
 $\angle S v_{6i+3} v_{6i+4}$ is $\delta$, then we have $\alpha=\delta$ which contradicts against
 Lemma~\ref{lem:u}. So
\begin{align}
 \mbox{the type of $\angle S v_{6i+3} v_{6i+4}$ is $\alpha$, and}\label{S34}\\
 \mbox{that of $\angle v_{6i+3} v_{6i+4} v_{6i+5}$ is $\delta$}.\label{345}
\end{align}
By the property~$\eqref{prop3star}^*$, the type of $\angle v_{6i+5} v_{6i+4} N$ is
 $\gamma$. Hence $K(\psi)=K(f(\psi))$ if $\psi$ is an angle around $v_{6i+3}$ or
 $v_{6i+4}$.  Therefore the vertex type of $v_{6i+3}$ is \eqref{eqv3}.
The property~$\eqref{prop3star}^*$ furthermore implies the type of
 $\angle v_{6i+6} v_{6i+5} v_{6i+4}$ is $\beta$. By \eqref{345}, the
 type of $\angle v_{6i+4} v_{6i+5} S$ is $\gamma$. By \eqref{-101}, the
 type of $\angle S v_{6i+5} v_{6i+6}$ is $\beta$. Therefore
 $K(\psi)=K(f(\psi))$ if $\psi$ is an angle around $v_{6i+2}$ or $v_{6i+5}$. This
 completes the proof of the assertion~\eqref{invariance} of
 Lemma~\ref{lem:combinatorial}.
By the property~$\eqref{prop3star}^*$,
\begin{align}
 \mbox{the type of $\angle N v_{6i} v_{6i-1}$ is $\alpha$ and}\nonumber\\
 \mbox{the type of $\angle v_{6i-2} N v_{6i}$ is $\delta$.} \label{4N6}
\end{align}
Because of \eqref{0N2}, \eqref{2N4} and \eqref{4N6}, the vertex type of
the north pole is 
\begin{align*}
(\alpha+\beta+\delta)\frac{F}{6}=2\pi.
\end{align*} 
From this,
equation \eqref{eq:area} and equation \eqref{eqv3}, we obtain the
assertion~\eqref{thm:k:case1} of Lemma~\ref{lem:combinatorial}.  The
 claim $\gamma>\pi$ follows from
 Lemma~\ref{lem:combinatorial}~\eqref{multiple6}.
The assertion~\eqref{fundamental} of Lemma~\ref{lem:combinatorial}
 follows from \eqref{4N6}, \eqref{0N2},  \eqref{2N4}; 
the property~$\eqref{prop3star}^*$, \eqref{-101}; 
\eqref{N21}, \eqref{123}, \eqref{32N};
\eqref{23S}, \eqref{432}, and \eqref{S34}.

Next consider the case the properties $\eqref{prop3star}$ and
$\eqref{prop4}$ of Lemma~\ref{lem:6} hold. By the same argument as
above but with $(\alpha,\beta)\leftrightarrow(\delta,\gamma)$ being
swapped, we obtain a chart from Figure~\ref{chart:a} with
$(\alpha,\beta)\leftrightarrow (\delta,\gamma)$ swapped and obtain the
assertion~\eqref{thm:k:case1}$^*$. But these data provide exactly the
same angle-assignment for the map $\pdw{F}$ we have just obtained in the
 case the properties~\eqref{prop3star}$^*$ and
 \eqref{prop4}$^*$ hold. Hence the
 assertions~\eqref{multiple6}, \eqref{type2}, \eqref{invariance},
 \eqref{thm:k:case1} and \eqref{fundamental} of
 Lemma~\ref{lem:combinatorial} hold also.
\end{proof}

\section{The absolute values of the angles, the edges and the number of tiles}\label{subsec:chart}

By trigonometry argument, we finish the proof of
Assertion~\eqref{assert:drawn} of Theorem~\ref{thm:k}. In the rest of
the paper, we use the spherical polar coordinate system introduced in
Assertion~\eqref{thm:l:coordinates} of Theorem~\ref{thm:k}.  The vertex
$S$ of the chart is located at the south pole $(\pi,0)$.  To see it,
first we can compute the longitude $\varphi$ of the vertex $S$ by
applying trigonometry arguments along a path $N v_0 v_1 S$.  By rotating
the path $N v_0 v_1 S$ by $(\alpha+\beta+\delta) = 12\pi/F$ radian, we
obtain a path $N v_6 v_7 S$. By applying the same trigonometry arguments
along the latter path, the longitude of $S$ becomes $\pi+\varphi$.  Thus
the vertex $S$ should be the north pole or the south pole of the
spherical polar coordinate system. If the vertex is the north pole, then
some tile containing the vertex $N$ and some tile containing the vertex
$S$ shares the interior, which contradicts against the definition of the
tiling.

For the length $a$ of an edge of the tile, $\cos a$ is positive.  To see it,
 assume $a\ge \pi/2$. Then the two points $v_2$ and $v_4$ are located on
 the southern hemisphere while the point $v_3$ on the northern
 hemisphere. So $\gamma=\angle v_3 v_2 N$ is less than or equal to $\pi$, which
 contradicts against the assertion~\eqref{thm:k:case1} of Lemma~\ref{lem:combinatorial}.

Consider a triangle $N v_2 v_3$. If we join the edge $N v_3$ of the
triangle with the edge $v_3 S$ of the tiling, we obtain a geodesic line
from the north pole to the south pole. Because the edge $v_3 S$ has
length $a$, 
\begin{align*}
N v_3 = \pi-a.
\end{align*}
 The first equation of \eqref{32N2N4} of
 Lemma~\ref{lem:combinatorial} implies $\angle v_3 v_2 N=\gamma>\pi$. So
 the angle between the two edges $N v_2$ and $v_2 v_3$ is $2\pi -
 \gamma=8\pi/F$.  By the spherical cosine theorem, we obtain a quadratic
 equation $\cos(\pi -a) = \cos^2 a + \sin^2 a \cos (8\pi/F)$ of $\cos
 a$. Since $\cos a>0$, the solution is
\begin{align}
\cos a = - \frac{\cos\left(\frac{8\pi}{F}\right)}{1-\cos\left(\frac{8\pi}{F}\right)} >0. \label{qq}
\end{align} 

Thus by the premise $F\ge10$ of Theorem~\ref{thm:k} and
Lemma~\ref{lem:combinatorial}~\eqref{multiple6}, the number of tiles is
$F=12$. From this and \eqref{qq}, the length $a$ is as in the
equation~\eqref{lgh:a} of Theorem~\ref{thm:k}.

By $F=12$ and the assertion~\eqref{thm:k:case1} of
Lemma~\ref{lem:combinatorial}, the inner angles $\beta$ and $\gamma$ are
as in the equations \eqref{angle:beta} and
\eqref{angle:gamma} of Theorem~\ref{thm:k}. By the spherical cosine
theorem applied to an isosceles triangle $v_2 N v_4$, the equations
\eqref{lgh:a} and \eqref{angle:beta} imply
\begin{align}
\cos v_4 v_2 = \cos ^2 a + \sin^2 a \cos
\beta
=\frac{1}{3^2} + \left( 1- \frac{1}{3^2}\right) \cos \frac{\pi}{3} = \frac{5}{9}.
\label{lgh:42}
\end{align}

The length $b$ of an edge is less than $\pi$, because otherwise the two edges
$N v_0$ and $N v_6$ of length $b$ share more than two points.  We compute the length
$b$ by considering a triangle $N v_0 v_1$. If we join the edge $N v_1$
of the triangle with the edge $v_1 S$ of the tiling, we obtain a
geodesic line from the north pole to the south pole. Moreover the edge
$v_1 S$ has length $b$. Therefore the edge $N v_1$ of the triangle $N
v_0 v_1$ has length $\pi-b$. Because two tiles $S v_{11} v_0 v_1$ and
$v_4 N v_2 v_3$ are congruent,
\begin{align*}
\mbox{the segment $v_2 v_4$ has length $\pi - b$. }
\end{align*}
By \eqref{lgh:42}, the length $b$ is as in the equation \eqref{lgh:b} of
Theorem~\ref{thm:k}.

Then we compute the other inner angles $\alpha$ and $\delta$ of the
tile.  To compute $\alpha$, we apply the spherical cosine theorem to a
triangle $N v_3 v_4$. By the equations \eqref{lgh:a}, \eqref{lgh:b} and
\eqref{32N2N4}, we have $\cos (\pi - a) = \cos b \cos a + \sin b \sin a
\cos \alpha$. Here $0<\alpha<\pi$ because $\alpha$ occurs twice in the
vertex type of the vertex $N$. Hence the inner angles $\alpha$ and
$\delta$ are as in the equations~\eqref{angle:alpha} and
\eqref{angle:delta} of Theorem~\ref{thm:k}.

By the assertion~\eqref{fundamental} and the
assertion~\eqref{invariance} of Lemma~\ref{lem:combinatorial}, all the
angles of the chart $\A$ are as in Assertion~\eqref{assert:drawn} of
Theorem~\ref{thm:k}.

\section{The existence of the tile}\label{sec:exist}

To prove that the tiling $\T$ indeed exists, we verify that the
quadrangle $N v_2 v_3 v_4$ indeed exists. It is sufficient to show the
following three assertions:
\begin{enumerate}[(i)]
\item \label{i:diag} Let $u$ be the value of $\cos v_2 v_4$ computed by
applying the spherical cosine theorem to the triangle $v_2 v_3 v_4$. Then
$u$ is equal to the value $5/9$ of $\cos v_2 v_4$ we have computed
      already in \eqref{lgh:42} by applying
spherical cosine theorem to the triangle $v_2 N v_4$.

\item
\label{ii:triangle}
The triangles $N v_2 v_4$ and $v_2 v_3 v_4$ indeed exist.

\item
\label{iii:ribbon}The
different edges of the tile $N v_0 v_1 v_2$  do not have a common inner point.
\end{enumerate}

To prove the assertion~\eqref{i:diag}, we observe that $\sin a=
2\sqrt2/3$ and $\sin b =2\sqrt{14}/9$, because $0<a,b<\pi$, the
equations \eqref{lgh:a} and \eqref{lgh:b} hold.  Then $u= \cos v_3 v_2
\cos v_3 v_4 + \sin v_3 v_2 \sin v_3 v_4 \cos \angle v_4 v_3 v_2 = \cos
a \cos b + \sin a \sin b \cos \delta = 5/9$ as desired.

To prove the assertion~\eqref{ii:triangle}, we employ
Proposition~\ref{prop:vinberg}.  First we verify the assumptions of
Proposition~\ref{prop:vinberg} for the triangle $v_2 v_3 v_4$.  Let
$\phi$ be $\angle v_0 N v_1$. By applying spherical cosine theorem to a
triangle $v_0 N v_1$, we have $\cos a = \cos b \cos(\pi -b) + \sin b
\sin(\pi -b) \cos\phi$. By the equation~\eqref{lgh:b}, we have $\phi=\pm
\arccos (13/14)$. We prove the equation~\eqref{angle:phi}, that is,
$\phi=\arccos(13/14)$. If $\phi$ is $- \arccos (13/14)$, then the
spherical polar coordinate of $v_1$ and that of $v_2$ are
$(\theta_1,\varphi_1):=(\pi -b, -\arccos(13/14))$ and
$(\theta_2,\varphi_2):=(a, \alpha)$ respectively. For the length $a$ of
the geodesic line between the two points $v_1$ and $v_2$, the value
$\cos a=1/3$ should be equal to the inner product of the cartesian
coordinates of $v_1$ and $v_2$, that is,
$\sin(\theta_1)\sin(\theta_2)\cos(\varphi_2 - \varphi_1) +
\cos(\theta_1)\cos(\theta_2) = -5/21$.  This is a contradiction.  Hence
we have \eqref{angle:phi}. Thus
\begin{align}
\angle v_2 v_4 v_3=\angle v_0 N v_1 = \phi = \arccos\frac{13}{14}=  0.380251\cdots. \label{k}
\end{align}
By spherical cosine theorem applied to the triangle $v_2 v_3 v_4$, we
have $$\cos b = \cos a \cos (\pi - b) + \sin a \sin (\pi -b) \cos \angle
v_3 v_2 v_4.$$ By the equations \eqref{lgh:a}, \eqref{lgh:b}, and
\eqref{qq}, we have $\cos\angle v_3 v_2 v_4 = -5/(2\sqrt7)$. If $\angle
v_3 v_2 v_4$ is greater than $\pi$, then the edges $v_3 v_4$ and $N v_2$
of the tile $N v_2 v_3 v_4$ has a common inner point, which is a
contradiction. Hence
\begin{align}
 \angle v_3 v_2 v_4 = \arccos \frac{-5}{2\sqrt7} =
 2.80812\cdots. \label{l}
\end{align}
For a geodesic line from $N$ to $S$ through $v_1$, we have $\angle v_0
v_1 N = \angle v_3 v_2 v_4$. So
\begin{align}
 \angle  v_4 v_3 v_2 = \delta  = \pi - \angle v_3 v_2 v_4 =  \arccos
 \frac{5}{2\sqrt7} = 0.3334373\cdots. \label{m}
\end{align}
By \eqref{k}, \eqref{l} and \eqref{m}, the assumptions of
Proposition~\ref{prop:vinberg} hold for the triangle $v_2 v_3
v_4$.

Next we verify the assumptions of Proposition~\ref{prop:vinberg} for the
triangle $v_2 N v_4$.  By the equations \eqref{angle:gamma}, \eqref{l},
\eqref{angle:beta}, \eqref{angle:alpha}, and \eqref{angle:phi}, the
inner angles of the triangle $v_2 N v_4$ are
\begin{align*}
&\gamma -\angle v_3 v_2 v_4 = \frac{4\pi}{3} - \arccos\frac{-5}{2\sqrt7}
 = 1.38067\cdots,\quad \beta = \frac{\pi}{3} = 1.0472\cdots,\\
&\alpha-\phi = \arccos\frac{-1}{2\sqrt{7}} -
 \arccos\frac{13}{14} = 1.38067\cdots.
\end{align*}
These also satisfy the assumptions of
Proposition~\ref{prop:vinberg}. Thus the triangle $v_2 N v_4$
indeed exists.

The assertion~\eqref{iii:ribbon} holds because the longitudes of the
vertices $v_0, v_1$ and $v_2$ are, respectively, $0$,
$\phi=\arccos(13/14)$, and $\alpha=\arccos(-1/(2\sqrt7))$, increasing
between $0$ and $\pi$.
To sum up, the quadrangle $N v_2 v_3 v_4$ indeed exists.

\section{The uniqueness up to special orthogonal
  transformation}\label{subsec:coordinates} 

We prove Assertion~\eqref{thm:l:coordinates} of Theorem~\ref{thm:k}.
Observe that the spherical polar coordinate of each vertex $v_{2i}$
(resp. $v_{2i+1}$) of the tiling $\T$ is determined from
Assertion~\eqref{assert:drawn} of Theorem~\ref{thm:k} by considering the
angle around the north pole $N$~(resp. around the south pole $S$ with the
equation~\eqref{angle:phi} of Theorem~\ref{thm:k}). The length of the
geodesic line between $N$ and $v_{2i+1}$ is $\pi$ subtracted by the
length of the edge $S v_{2i+1}$.

We have a freedom to set a spherical polar coordinate system, so long
that the north pole is the vertex $N$ and the ``Greenwich meridian'' of
the spherical polar coordinate system contains the edge $N v_0$.
However, the spherical coordinate system must respect the cyclic orders
of the vertices of the chart. So each vertex is unique up to special orthogonal transformation.
In fact, the mirror image of the tiling cannot be transformed to the
original tiling, by a special orthogonal transformation.

\section{The symmetry}\label{subsec:symmetry}
To complete the proof of Theorem~\ref{thm:k}, we prove
Assertion~\eqref{D2} of Theorem~\ref{thm:k}: Sch\"onflies symbol of the
tiling $\T$ is $D_2$. The Sch\"onflies symbol is determined according
to \cite[p.~55]{cotton09:_chmic_applic_of_group_theor} and Step~1,
Step~2,$\ldots$ of
\cite[Figure~3.10~(p.~56)]{cotton09:_chmic_applic_of_group_theor}.
As for Step~1 of Figure~3.10, the tiling $\T$ does not have continuous rotational
symmetry, and so the Sch\"onflies symbol is neither $C_{\infty v}$ nor
$D_{\infty h}$. Moreover the tiling has none of 3-fold and 5-fold
symmetry, so the
Sch\"onflies symbol is none of $T,T_h, T_d, O, O_h, I,
I_h$.

Observe that the tiling $\T$ has the following three $2$-fold rotation axes
perpendicular to each other~(cf.\@ Figure~\ref{fig:perpD2Axis}):
\begin{itemize}
\item The axis through the both poles. The rotation around this axis by $\pi$-radian maps a point
      $(\theta, \varphi)$ on the sphere to a point
$(\theta, \varphi+\pi)$. The action of the rotation on the vertices and the edges of the tiling $\T$ is
the automorphism $g$ defined in \eqref{auto:g}.

\item The axis through the ball's center and the midpoint of $v_0$ and
$v_1$.  The rotation around this axis by $\pi$-radian maps a point
      $(\theta, \varphi)$ on the sphere to a point
$(\pi-\theta, \phi-\varphi)$ where $\phi$ is defined in the equation \eqref{angle:phi}.  
The action of the rotation on the vertices and the edges of the tiling $\T$ is
the automorphism $f$ defined in \eqref{auto:f}.

\item  The axis through the ball's center and the midpoint of $v_3$
and $v_4$. The $2$-fold rotation around this axis corresponds to the
      automorphism $g\circ f$.
\end{itemize} 
Because of the three
perpendicular 2-fold axes, we can skip the Step~2, and we see in Step~3 that the Sch\"onflies symbol is none of $S_4, S_6,
S_8,\ldots$. Hence, we go to Step~5 and see that the symbol is one of
$D_2, D_{2d}, D_{2h}$.  But by Figure~\ref{fig:perpD2Axis}, the tiling
has no mirror plane perpendicular to a 2-fold axis, the symbol is not
$D_{2h}$. Moreover the tiling has no mirror plane between two 2-fold
axes, so the symbol is not $D_{2d}$ either. Therefore the Sch\"onflies
symbol is $D_2$.  This completes the proof of Theorem~\ref{thm:k}.

Three movies of the tiling $\T$ spinning around the rotation axes are
available at a web-site http://www.math.tohoku.ac.jp/akama/stcq.

\section*{Acknowledgement}
The author thanks Kosuke Nakamura and an anonymous referee.

\end{document}